\documentclass[a4paper,12pt, reqno]{amsart}


\usepackage{amsmath, amsthm, amscd, amsfonts, amssymb, graphicx, color}
\usepackage[bookmarksnumbered, colorlinks, plainpages]{hyperref}

\usepackage{enumitem}
\usepackage{url}
\usepackage{multirow}
\usepackage{graphicx}
\usepackage{subfigure}
\usepackage[pass]{geometry}
\usepackage{cite}
\usepackage{cancel}
\usepackage{color}
\usepackage{xcolor}
\usepackage{pgf,tikz}
\usetikzlibrary{arrows}
\usepackage{lscape}
\tikzstyle{v} = [circle, draw, inner sep=2pt, minimum size=3pt, fill=black]
\usetikzlibrary{calc,shapes,decorations.pathreplacing,matrix}
\usetikzlibrary{patterns}

\tikzset{square matrix/.style={
    matrix of nodes,
    column sep=-\pgflinewidth, row sep=-\pgflinewidth,
    nodes={draw,
      minimum height=4.5pt,
      anchor=center,
      text width=4.5pt,
      align=center,
      inner sep=0pt
    },
  },
  square matrix/.default=1.2cm
}

\newtheorem{Theorem}{Theorem}[section]
\newtheorem{Definition}[Theorem]{Definition}

\newtheorem{Lemma}[Theorem]{Lemma}

\newtheorem{Corollary}[Theorem]{Corollary}
\newtheorem{Remark}[Theorem]{Remark}
\newtheorem{Example}[Theorem]{Example}
\newtheorem{Problem}[Theorem]{Problem}

\DeclareMathOperator{\leaf}{leaf}
\DeclareMathOperator{\diam}{diam}

\begin{document}

\title{Total Dominator coloring  number of middle graphs}

\author[F. Kazemnejad]{Farshad Kazemnejad}
\address{Farshad Kazemnejad, Department of Mathematics, Faculty of Basic Sciences, Ilam University, P.O. Box 69315-516, Ilam, Iran.}
\email{kazemnejad.farshad@gmail.com}
\author[B. Pahlavsay]{Behnaz Pahlavsay}
\address{Behnaz Pahlavsay, Department of Mathematics, Hokkaido University, Kita 10, Nishi 8, Kita-Ku, Sapporo 060-0810, Japan.}
\email{pahlavsay@math.sci.hokudai.ac.jp}
\author[E. Palezzato]{Elisa Palezzato}
\address{Elisa Palezzato, Department of Mathematics, Hokkaido University, Kita 10, Nishi 8, Kita-Ku, Sapporo 060-0810, Japan.}
\email{palezzato@math.sci.hokudai.ac.jp}
\author[M. Torielli]{Michele Torielli}
\address{Michele Torielli, Department of Mathematics, GI-CoRE GSB, Hokkaido University, Kita 10, Nishi 8, Kita-Ku, Sapporo 060-0810, Japan.}
\email{torielli@math.sci.hokudai.ac.jp}

\date{\today}

\maketitle

\begin{abstract}
A total dominator coloring of a graph $G$ is a proper coloring of
$G$ in which each vertex of the graph is adjacent to every vertex of
some color class. The total dominator chromatic number of a graph is the minimum number of color classes in a total dominator
coloring. In this article, we study the total dominator coloring on middle graphs by giving several bounds for the case of general graphs and  trees. Moreover, we  calculate explicitely the total dominator chromatic number of the middle graph of several known families of graphs.
%
\\[0.2em]

\noindent
Keywords: Total dominator coloring, Total dominator chromatic number, Total domination number, Middle graph.
\\[0.2em]

\noindent
MSC(2010): 05C15, 05C69.
\end{abstract}



\section{Introduction}

All graphs considered in this paper are non-empty, finite, undirected and simple. For standard graph theory terminology not given here we refer to \cite{West}. 
For a simple graph $G=(V(G),E(G)) $ we will denote the
\emph{open neighbourhood} and the \emph{closed neighbourhood} of a
vertex $v\in V(G)$ by $N_{G}(v)=\{u\in V(G)\ |\ uv\in E(G)\}$ and
$N_{G}[v]=N_{G}(v)\cup \{v\}$, respectively, the
\emph{minimum} and \emph{maximum degree} of $G$ by
$\delta =\delta (G)$ and $\Delta =\Delta (G)$, respectively, and the \emph{induced subgraph} by $S\subset V(G)$ by $G[S]$.



The notion of domination is well studied in graph theory and the literature
on this subject has been surveyed in the two books \cite{hhs1, hhs2}, see also \cite{PPTdominlatin} and \cite{KPPTdomin}. 
A famous generalization of domination is the notion of total domination, see \cite{HeYe13}, \cite{PPT3dominrook} and \cite{KPPT}. A \emph{total dominating set} $S$ of a graph $G$ is a subset of $V(G)$ such that for each vertex $v$ we have $N_G(v)\cap
S\neq \emptyset$. The \emph{total domination number $\gamma_t(G)$} of $G$ is the minimum cardinality of a total dominating set of $G$. 

Similarly to the notion of domination, also the notion of coloring of graphs has been intensively studied.
\begin{Definition} A \emph{proper coloring} of a graph $G$ is a function from $V(G)$ to a set of colors such that any two
adjacent vertices have a different color. The \emph{chromatic number}
$\chi (G)$ of $G$ is the minimum number of colors needed in a proper
coloring of $G$.
\end{Definition} 
In a proper coloring of a graph, a \emph{color class} is a set consisting of all those vertices assigned the same
color. If $f$ is a proper coloring of $G$ with the coloring classes
$V_1, \dots, V_\ell$ such that every vertex in $V_i$ has
color $i$, we write simply $f=(V_1,\dots,V_{\ell})$. 

Similarly to the notion of chromatic number, we can recall the notion of edge chromatic number.
\begin{Definition} 
The \emph{edge chromatic number}, sometimes also called the \emph{chromatic index}, of a graph $G$ is the smallest number of
 colors necessary to color each edge of $G$ such that no two edges incident on the same vertex have 
the same color. In other words, it is the number of distinct colors in a minimum edge coloring and it is denoted by $\chi'(G)$.
\end{Definition}


Graph coloring is used as a model for a vast
number of practical problems involving allocation of scarce
resources and it has played a key role in
the development of graph theory and, more generally, discrete
mathematics and combinatorial optimization. 


Motivated by the relation between coloring and domination, the notion of total dominator colorings was introduced in \cite{Kaz2015}. For more information see \cite{1Kaz,2Kaz,3Kaz,4Kaz,Hen2015}.

\begin{Definition}[\cite{Kaz2015}]
\label{total dominator coloring}  A \emph{Total Dominator Coloring},
briefly TDC, of a graph $G$ with a positive minimum degree is a proper coloring of $G$ in
which each vertex of $G$ is adjacent to every vertex of some
color class. The \emph{total dominator chromatic number} $\chi_d^t(G)$
of $G$ is the minimum number of color classes in a TDC of $G$.
\end{Definition}

\begin{Example} Consider $G=P_4$ with $V(G)=\{v_1,\dots, v_4\}$ and $E(G)=\{v_1v_2,v_2v_3,v_3v_4\}$. If we consider $V_1=\{v_1,v_3\}$ and $V_2=\{v_2,v_4\}$, then $f=(V_1,V_2)$ is a proper coloring of $G$ but it is not a total dominator coloring, since $v_1$ is not adjacent to every vertex of some color class. On the other hand, if we consider $V_1=\{v_1,v_4\}$, $V_2=\{v_2\}$ and $V_3=\{v_3\}$, then $f=(V_1, V_2, V_3)$ is a total dominator coloring of $G$.
\end{Example}

\begin{Definition} Let $f=(V_1,\dots,V_{\ell})$ be a total dominator coloring  of a graph $G$. If a vertex $v\in V(G)$ satisfies $V_i \subseteq N_G(v)$, then $v$ is called a \emph{common neighbour} of
$V_i$ or we say that $V_i$ \emph{totally dominates} $v$. In this case, we write $v\succ_t V_i$, otherwise, we write $v \not\succ_t  V_i$. 
\end{Definition}

The set of all common neighbours of $V_i$ with respect to $f$ is called the \emph{common neighbourhood} of $V_i$ in $G$ and denoted by $CN_G(V_i)$ or simply by $CN(V_i)$. 
A vertex $v$ is called a \emph{private neighbour} of $V_i$ with respect to $f$ if $v\succ_t V_i$ and $v\nsucc_t V_j$ for all $j\neq i$.
Moreover, for any total dominator coloring $f=(V_1,\dots,V_{\ell})$ of a graph $G$, we have 
\begin{eqnarray}\label{cup CN(V_i)=V(G)}
\bigcup\limits_{i=1}^{\ell} CN(V_i)=V(G).
\end{eqnarray}

In \cite{HamYos}, the authors introduced the notion of the middle graph $M(G)$ of a graph $G$ as an intersection graph on $V(G)$.

\begin{Definition}
Let $G=(V(G),E(G)) $ be a simple graph.
 The \emph{middle graph} $M(G)$ of a graph $G$ is the graph whose vertex set is $V(G)\cup E(G)$ and two vertices $x, y$ in the vertex set of $M(G)$ are adjacent in $M(G)$ in case one the following holds
 \begin{enumerate} 
 \item $x, y$ are in $E(G)$ and $x, y$ are adjacent in $G$. 
 \item $x$ is in $V(G)$, $y$ is in $E(G)$, and $x, y$ are incident in $G$. 
 \end{enumerate}
 \end{Definition}
 
It is obvious that $M(G)$ contains the line graph $L(G)$ as induced subgraph, and that if $G$ is a graph of order $n=|V(G)|$ and size $m=|E(G)|$, then $M(G)$ is a graph of order $n+m$ and size $2m+|E(L(G))| $ which is obtained by subdividing each edge of $G$ exactly once and joining all the adjacent edges of $G$ in $ M(G)$. 

In order to avoid confusion throughout the paper, we fix a ``standard'' notation for the vertex set and the edge set of $M(G)$. Assume $V(G)=\{v_1,\dots, v_n\}$, then we set $V(M(G))=V(G)\cup \mathcal{M}$, where $\mathcal{M}=\{m_{ij}~|~ v_iv_j\in E(G)\}$ and $E(M(G))=\{v_im_{ij},v_jm_{ij}~|~ v_iv_j\in E(G)\}\cup E(L(G)) $.



The goal of this paper is to study the total dominator chromatic number of middle graphs.
In Section 2, we describe some useful bound involving total dominator chromatic number. In Section 3, we prove a series of lemmas that will play an important role in the rest of the paper. In Section 4, we calculate the total dominator chromatic number of the middle graph of several known families of graphs. In Section 5, we describe bounds for the total dominator coloring number of the middle graph of trees.





\section{Useful bounds }
We start this section by recalling some known bounds for total dominator coloring numbers.

\begin{Theorem}[\cite{Kaz2015}]
\label{2=<chi_d ^t=<n} For any connected graph $G$ of order $n$ with $\delta(G)\geq 1$, 
\[
\max\{\chi(G),\gamma_t(G),2\}\leq \chi_d ^t(G)\leq n.
\]
Furthermore, $\chi_d ^t(G)=2$ if and only if $G$ is a
complete bipartite graph, and $\chi_d ^t(G)=n$ if and only if $G$ is a complete graph.
\end{Theorem}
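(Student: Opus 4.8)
The plan is to prove the four assertions in turn. \emph{Lower bound.} Since every TDC is in particular a proper colouring, $\chi(G)\le\chi_d^t(G)$; and because $\delta(G)\ge1$ forces $G$ to contain an edge, $\chi(G)\ge2$, so $\chi_d^t(G)\ge2$. To obtain $\gamma_t(G)\le\chi_d^t(G)$, I would fix an optimal TDC $f=(V_1,\dots,V_\ell)$ with $\ell=\chi_d^t(G)$, choose one representative $w_i\in V_i$ for each $i$ with $CN(V_i)\neq\emptyset$, and set $D=\{w_i : CN(V_i)\neq\emptyset\}$; since every vertex lies in the common neighbourhood of some colour class, each $v$ has a neighbour $w_i\in V_i\subseteq N_G(v)$ in $D$, so $D$ is a total dominating set with $|D|\le\ell$.

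\emph{Upper bound and the case $\chi_d^t(G)=2$.} The all-singletons colouring $V_i=\{v_i\}$ is proper, and $\delta(G)\ge1$ guarantees each $v$ has a neighbour $v_j$ with $V_j\subseteq N_G(v)$, so it is a TDC; hence $\chi_d^t(G)\le n$. If $G=K_{a,b}$, colouring its two parts with two colours makes each vertex adjacent to all of the opposite colour class, so $\chi_d^t(G)=2$. Conversely, a proper $2$-colouring $f=(V_1,V_2)$ displays $G$ as bipartite with parts $V_1,V_2$; a vertex of $V_1$ has no neighbour in $V_1$, so the TDC condition forces it to be adjacent to all of $V_2$, and symmetrically — hence $G$ is complete bipartite.

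\emph{The case $\chi_d^t(G)=n$.} The ``if'' direction is immediate from $\chi(K_n)=n$ together with the upper bound. For ``only if'' I argue by contraposition: assuming $G\ne K_n$ — which forces $n\ge3$, since a connected graph on two vertices with $\delta\ge1$ is $K_2$ — I build a TDC with $n-1$ colours by picking a non-adjacent pair $v_a,v_b$, merging them into one colour class $V_1=\{v_a,v_b\}$, and leaving all other vertices as singletons. Properness holds because $v_a\not\sim v_b$. Checking the TDC condition: $v_a$ and $v_b$ are always totally dominated, since any neighbour of $v_a$ (one exists as $\delta\ge1$) lies outside $\{v_a,v_b\}$ and hence forms a singleton class contained in $N_G(v_a)$; and a third vertex $v$ can fail only if $N_G(v)$ is exactly $\{v_a\}$ or exactly $\{v_b\}$, i.e. only if $v_a$ or $v_b$ is the unique neighbour of some vertex (equivalently, is adjacent to a leaf).

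The step I expect to be the main obstacle is choosing $\{v_a,v_b\}$ so that neither vertex is the unique neighbour of any vertex. I would split into cases by the number of leaves of $G$: if $\delta(G)\ge2$ there are no leaves, so any non-adjacent pair works; if $G$ has two or more leaves, any two of them are non-adjacent and neither is adjacent to a leaf (else $G=K_2$, excluded); and if $G$ has exactly one leaf $u$, with support $s$, then $u$ is non-adjacent to every vertex other than $s$, and — since $u$ is the only leaf — pairing $u$ with any such vertex avoids the obstruction. In each case the construction yields a TDC with $n-1$ colours, so $\chi_d^t(G)\le n-1<n$, contradicting $\chi_d^t(G)=n$; therefore $G=K_n$.
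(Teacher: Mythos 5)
The paper states this theorem as a quoted result from the reference [Kaz2015] and supplies no proof of its own, so there is no internal argument to compare yours against; I can only assess your proof on its merits, and it is correct and self-contained. The easy parts (a TDC is a proper colouring; one representative per colour class with nonempty common neighbourhood gives a total dominating set of size at most $\ell$; the all-singletons colouring is a TDC when $\delta\ge 1$; the characterisation of $\chi_d^t=2$) are all handled properly. The only delicate point is the converse of $\chi_d^t(G)=n$, and your analysis there is sound: the merged class $\{v_a,v_b\}$ with all other vertices as singletons fails to be a TDC precisely when some vertex has $N_G(v)$ equal to $\{v_a\}$ or $\{v_b\}$, i.e.\ when $v_a$ or $v_b$ is a support vertex of a leaf, and your three cases (no leaves, at least two leaves, exactly one leaf) do exhaust the possibilities and in each case produce a non-adjacent pair avoiding the support vertices --- using connectivity and $n\ge 3$ to rule out a leaf being adjacent to a leaf. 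One could streamline the case analysis by observing that it suffices to choose $v_a,v_b$ non-adjacent with neither in the set of support vertices, and that in a connected non-complete graph on $n\ge 3$ vertices such a pair always exists, but your version is complete as written.
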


\begin{Theorem}[\cite{Kaz2015}]
\label{chi_d ^t=<gamma+min chi} 
For any connected graph $G$ of order $n$ with $\delta(G)\geq 1$, 
$$\chi_d ^t(G)\leq \gamma_t(G)+\min\{\chi(G[V(G)-S])~|~S \text{ is a min-TDS of } G\},$$
Moreover, $\chi_d ^t(G)\leq \gamma_t(G)+\chi(G)$.
\end{Theorem}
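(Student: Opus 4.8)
The plan is to exhibit an explicit total dominator coloring of $G$ realizing the claimed bound, for a fixed choice of minimum total dominating set, and then optimize over that choice. So first I would fix a minimum total dominating set $S$ of $G$, with $|S|=\gamma_t(G)$; note $S\neq\emptyset$ since $\delta(G)\geq 1$ forces every vertex to have a neighbour lying in $S$. The coloring I would build is: give each vertex of $S$ a color class of its own (that is $\gamma_t(G)$ singleton classes), and then superimpose a proper coloring of the induced subgraph $G[V(G)-S]$ using $\chi(G[V(G)-S])$ additional colors, chosen disjoint from the colors already used on $S$. If $V(G)-S=\emptyset$ the second step is vacuous. In total this uses $\gamma_t(G)+\chi(G[V(G)-S])$ color classes.

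Next I would check that this is indeed a proper coloring of $G$. Within $S$ all colors are pairwise distinct; within $G[V(G)-S]$ the coloring is proper by construction; and no edge joining a vertex of $S$ to a vertex of $V(G)-S$ can be monochromatic because the two color palettes are disjoint. Hence adjacent vertices always receive different colors.

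Then I would verify the total domination condition. Let $v\in V(G)$ be arbitrary. Since $S$ is a total dominating set, $v$ has a neighbour $u\in S$, and the color class of $u$ is the singleton $\{u\}$; thus $\{u\}\subseteq N_G(v)$, i.e.\ $v\succ_t\{u\}$. So every vertex of $G$ totally dominates some color class, and the coloring is a TDC. Therefore $\chi_d^t(G)\leq \gamma_t(G)+\chi(G[V(G)-S])$, and since $S$ was an arbitrary minimum total dominating set we may take the minimum over all such $S$, which gives the first inequality. The ``moreover'' statement is immediate: $G[V(G)-S]$ is an induced subgraph of $G$, so $\chi(G[V(G)-S])\leq \chi(G)$, and hence $\chi_d^t(G)\leq \gamma_t(G)+\chi(G)$.

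I do not expect a real obstacle in this argument: once one decides to spend one color on each vertex of a minimum total dominating set, the construction is essentially forced, and the only points needing care are the disjointness of the two palettes (needed for properness) and the degenerate case $S=V(G)$, where the bound simply reads $\chi_d^t(G)\leq n=\gamma_t(G)$ and is consistent with Theorem~\ref{2=<chi_d ^t=<n}.
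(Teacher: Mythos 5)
Your construction is correct: assigning a singleton color class to each vertex of a fixed minimum total dominating set $S$ and then properly coloring $G[V(G)-S]$ with fresh colors yields a proper coloring in which every vertex totally dominates the singleton class of one of its neighbours in $S$, giving $\chi_d^t(G)\leq \gamma_t(G)+\chi(G[V(G)-S])$, and minimizing over $S$ and using $\chi(G[V(G)-S])\leq\chi(G)$ finishes the argument. The paper itself states this theorem as a citation from \cite{Kaz2015} without reproducing a proof, and your argument is exactly the standard one behind that result, so there is nothing to compare beyond noting that your handling of the degenerate case $S=V(G)$ and of the disjointness of the two palettes is the right level of care.
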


In Theorem~\ref{chi_d ^t=<gamma+min chi} we have to consider a minimum in order to get a better bound. In fact, if we consider $G=P_6$ with vertex set $\{v_1,\dots,v_6\}$, then $S_1=\{v_2,v_3,v_4,v_5\}$ and $S_2=\{v_1,v_2,v_5,v_6\}$ are two minimal TDS sets of $P_6$, with $\chi(G[V(G)-S_1])=1$ and $\chi(G[V(G)-S_2])=2$.



By Theorem \ref{2=<chi_d ^t=<n}, we have the following result.
\begin{Theorem} 
\label{2=<chiM_d ^t=<n} For any connected graph $G$ of order $n\geq 2$ and size $m$, 
\[
\max\{\chi(M(G)),\gamma_t(M(G))\}\leq \chi_d ^t(M(G))\leq n+m-1.
\]
\end{Theorem}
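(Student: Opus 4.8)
The plan is simply to apply Theorem~\ref{2=<chi_d ^t=<n} to the graph $M(G)$ itself, once the hypotheses of that theorem have been checked. First I would note that, since $G$ is connected and $n\geq 2$, it has at least one edge, so $m\geq 1$ and $M(G)$ is a graph of order $n+m$. Next I would verify the two structural hypotheses: $M(G)$ is connected, because any walk in $G$ lifts to a walk in $M(G)$ by routing through the subdivision vertices $m_{ij}$; and $\delta(M(G))\geq 1$, because every $m_{ij}\in\mathcal{M}$ is adjacent to $v_i$ and $v_j$, while every $v_i\in V(G)$ is adjacent to $m_{ij}$ for some incident edge $v_iv_j\in E(G)$. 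Hence $\chi_d^t(M(G))$ is well defined and Theorem~\ref{2=<chi_d ^t=<n} yields
\[
\max\{\chi(M(G)),\gamma_t(M(G)),2\}\leq \chi_d^t(M(G))\leq n+m.
\]
Since $M(G)$ contains the edge $v_im_{ij}$, we have $\chi(M(G))\geq 2$, so the left-hand side already equals $\max\{\chi(M(G)),\gamma_t(M(G))\}$, which is exactly the claimed lower bound.

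It then remains to improve the trivial upper bound $n+m$ by one. For this I would invoke the equality characterization in Theorem~\ref{2=<chi_d ^t=<n}: $\chi_d^t(M(G))=n+m$ forces $M(G)$ to be a complete graph. But $M(G)$ can never be complete under our hypotheses, since by the definition of the middle graph no two vertices of $V(G)$ are adjacent in $M(G)$, and $G$ has at least one edge $v_iv_j$, so $v_i$ and $v_j$ are two non-adjacent vertices of $M(G)$. Therefore $\chi_d^t(M(G))\neq n+m$, and combining this with the upper bound above gives $\chi_d^t(M(G))\leq n+m-1$.

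I do not anticipate a genuine obstacle here; the proof is essentially a one-line reduction to Theorem~\ref{2=<chi_d ^t=<n} plus the observation that $M(G)$ is not complete. The only point that warrants a brief check is the boundary case $n=2$, $m=1$: there $n+m-1=2$ and $M(G)=M(P_2)=P_3=K_{1,2}$, so $\chi_d^t(M(G))=2$ by the complete-bipartite case of Theorem~\ref{2=<chi_d ^t=<n}, confirming that the bound is attained and that the reasoning is consistent at the extreme.
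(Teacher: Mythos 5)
Your proposal is correct and matches the paper's intent exactly: the paper derives this statement directly from Theorem~\ref{2=<chi_d ^t=<n} applied to $M(G)$, with the improvement from $n+m$ to $n+m-1$ coming from the fact that $M(G)$ is never complete (the vertices of $V(G)$ form an independent set in $M(G)$). You have merely spelled out the hypothesis checks and the non-completeness observation that the paper leaves implicit.
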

From \cite{Kaz2015}, it follows that for any graph $G$, with connected components $G_1, \dots,G_w$, which has no isolated vertex, we have
\[
\max\limits_{i=1}^{w}\{\chi_d^t(G_i)\}+2w-2\leq \chi_d ^t(G) \leq \sum_{i=1}^{w}\chi_d ^t(G_i).
\]
Moreover, since $M(G)=M(G_1)+\cdots+M(G_w)$, we obtain the following result.

\begin{Theorem}  For any graph $G$ with connected components $G_1, \dots,G_w$ which has no isolated vertex, we have
\[
\max\limits_{i=1}^{w}\{\chi_d^t(M(G_i))\}+2w-2\leq \chi_d ^t(M(G)) \leq \sum_{i=1}^{w}\chi_d ^t(M(G_i)).
\]
\end{Theorem}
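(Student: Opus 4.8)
The plan is to reduce the statement to the disjoint-union bound for $\chi_d^t$ quoted immediately above, applied with the graph $M(G)$ playing the role of $G$.

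First I would record the structural fact, already announced in the excerpt, that $M(G)=M(G_1)+\cdots+M(G_w)$, where $+$ denotes disjoint union. This is immediate from the definition of the middle graph: $V(M(G))=V(G)\cup\mathcal{M}$ splits as the disjoint union of the sets $V(G_i)\cup\{m_{jk}\mid v_jv_k\in E(G_i)\}$, and every edge of $M(G)$ is either of the form $v_jm_{jk}$, which forces the vertex $v_j$ and the edge $v_jv_k$ to lie in the same component of $G$, or an edge of $L(G)$ joining two edges of $G$ sharing an endpoint, which again forces both to lie in the same component of $G$. Hence no edge of $M(G)$ runs between the blocks $V(M(G_i))$, so $M(G)$ is indeed the disjoint union of the induced subgraphs $M(G_i)$.

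Next I would check the hypotheses needed to invoke the cited inequality. Since $G$ has no isolated vertex, every component $G_i$ has order at least $2$ and therefore contains at least one edge; consequently each $M(G_i)$ is connected — it is obtained from the connected graph $G_i$ by subdividing each edge once and adding the line-graph edges, and any $G_i$-path lifts to an $M(G_i)$-path through the corresponding subdivision vertices — and has minimum degree at least $1$, because every vertex $v_j$ of $G_i$ is incident to some edge and every subdivision vertex $m_{jk}$ is adjacent to $v_j$ and $v_k$. Therefore $M(G_1),\dots,M(G_w)$ are precisely the connected components of $M(G)$, the graph $M(G)$ itself has no isolated vertex, and all the quantities $\chi_d^t(M(G_i))$ and $\chi_d^t(M(G))$ are well defined.

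Finally I would apply the disjoint-union bound of \cite{Kaz2015} recalled just before the statement to the graph $M(G)$ with components $M(G_1),\dots,M(G_w)$, obtaining exactly
\[
\max_{i=1}^{w}\{\chi_d^t(M(G_i))\}+2w-2\leq \chi_d^t(M(G))\leq\sum_{i=1}^{w}\chi_d^t(M(G_i)),
\]
which is the desired inequality. There is no real obstacle here: the only point meriting a moment's care is the first step, namely confirming that the middle-graph construction respects the component decomposition of $G$, and that the hypothesis ``$G$ has no isolated vertex'' is precisely what guarantees every $M(G_i)$ (and $M(G)$) has positive minimum degree so that the cited bound applies.
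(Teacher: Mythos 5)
Your proposal is correct and follows exactly the paper's route: the paper simply observes that $M(G)=M(G_1)+\cdots+M(G_w)$ and applies the disjoint-union inequality from \cite{Kaz2015} to $M(G)$. You merely spell out the verification that the middle-graph construction respects components and that each $M(G_i)$ has positive minimum degree, which the paper leaves implicit.
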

Therefore, it is sufficient to verify the total dominator chromatic number of connected graph.
The next theorem describes  bounds for the total domination number of the middle graph.

\begin{Theorem}[\cite{KPPT}]
\label{2n/3 =<  gamma =< n-1}
	Let $G$ be a connected graph with $n \geq 3$ vertices. Then
	$$\lceil \frac{2n}{3}\rceil\leq\gamma_t(M(G))\leq n-1.$$
\end{Theorem}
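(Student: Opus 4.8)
The plan is to establish the two bounds separately, both by direct constructions on $M(G)$ that exploit the subdivision structure described after the definition of the middle graph. Recall that $V(M(G)) = V(G) \cup \mathcal{M}$, that each $v_i \in V(G)$ has $N_{M(G)}(v_i) = \{m_{ij} \mid v_iv_j \in E(G)\} \subseteq \mathcal{M}$, and that each subdivision vertex $m_{ij}$ is adjacent to $v_i$, to $v_j$, and to all $m_{ik}, m_{jk} \in \mathcal{M}$ sharing an endpoint. In particular the original vertices $v_1,\dots,v_n$ form an independent set in $M(G)$ whose only neighbours lie in $\mathcal{M}$, so any total dominating set of $M(G)$ must, for each $v_i$, contain at least one subdivision vertex incident to $v_i$; this is the structural fact driving the lower bound.

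For the \emph{upper bound} $\gamma_t(M(G)) \le n-1$, I would fix a spanning tree $T$ of the connected graph $G$ and take a leaf $v_\ell$ of $T$. The candidate total dominating set is $S = \{\, m_{ij} \mid v_iv_j \in E(T)\,\}$, the set of subdivision vertices corresponding to the $n-1$ tree edges. One checks (i) every $v_i \in V(G)$ is incident in $T$ to some edge, hence has a neighbour in $S$; (ii) every $m_{ij}$ with $v_iv_j\in E(G)$ has $v_i$ (say) incident to some other tree edge $v_iv_k$, so $m_{ik}\in S$ is a neighbour of $m_{ij}$ in $M(G)$ — the only thing to rule out is that $v_i$ is a leaf \emph{and} $v_j$ is a leaf of $T$, which cannot happen for $n\ge 3$ since a tree on $\ge 3$ vertices has an edge with a non-leaf endpoint, and one can choose $T$ / reroute so that no edge of $G$ has both endpoints leaves of $T$; and (iii) $S$ itself induces a connected-enough subgraph so that every vertex of $S$ has a neighbour in $S$ (two subdivision vertices of adjacent tree edges are adjacent in $M(G)$). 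This gives $|S| = n-1$. A cleaner route that avoids the leaf-leaf edge case: take $S$ to be the subdivision vertices of a spanning tree rooted at $v_\ell$, then argue each $m_{ij}$ is dominated via the endpoint closer to the root.

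For the \emph{lower bound} $\gamma_t(M(G)) \ge \lceil 2n/3 \rceil$, let $S$ be any total dominating set of $M(G)$ and write $S = S_V \sqcup S_{\mathcal M}$ with $S_V \subseteq V(G)$, $S_{\mathcal M}\subseteq \mathcal M$. By the structural remark, each $v_i$ needs a neighbour in $S$, and since $N_{M(G)}(v_i)\subseteq \mathcal M$ this forces each $v_i$ to be incident to some $m_{ij}\in S_{\mathcal M}$; thus $S_{\mathcal M}$, viewed as a set of edges of $G$, is an \emph{edge cover} of $G$, so $|S_{\mathcal M}| \ge \rho(G)$, the edge cover number. Separately, each chosen subdivision vertex $m_{ij}\in S_{\mathcal M}$ itself must have a neighbour in $S$; its neighbours are $v_i,v_j$ and the subdivision vertices of edges adjacent to $v_iv_j$. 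One then does a counting/discharging argument: group the edges of $S_{\mathcal M}$ and their endpoints and show the "cost" of covering plus totally-dominating forces $|S|\ge \tfrac{2}{3}n$. Concretely, since $\rho(G) = n - \mu(G)$ where $\mu(G)$ is the maximum matching size, and $\mu(G)\le n/2$, we already get $|S_{\mathcal M}|\ge n/2$; the extra $n/6$ comes from the vertices/edges needed to totally dominate the isolated-in-$S$ subdivision vertices (an edge cover that is a union of stars $K_{1,k}$ leaves the center-adjacent leaves' subdivision vertices needing their own dominator). I would organize this by assigning to each $v_i$ a "charge" of $1$, distributing it among the $\le$ few elements of $S$ responsible for $v_i$, and bounding how much charge any single element of $S$ can receive by $3/2$.

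The main obstacle is the lower bound: turning "edge cover plus each cover-edge is itself totally dominated" into the tight constant $2/3$ requires a careful case analysis of how the subdivision vertices in $S$ sit relative to each other (isolated in $M(G)[S]$ versus paired up along a common $G$-vertex), and ensuring the $\lceil\cdot\rceil$ is attained rather than just $2n/3$. I would expect to invoke or re-derive the known inequality $\gamma_t(M(G))\ge \gamma_t(\text{something})$ or simply cite that this bound already appears as Theorem~\ref{2n/3 =<  gamma =< n-1} from \cite{KPPT} — indeed, since the statement is attributed to \cite{KPPT}, the proof here may legitimately just quote it, and the real content of this section is its \emph{use} in the subsequent middle-graph TDC bounds rather than a fresh proof.
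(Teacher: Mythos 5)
The paper contains no proof of this statement: it is quoted verbatim from \cite{KPPT}, the authors' companion paper on the total domination number of middle graphs, and is used here only as input to Corollary~\ref{chiM_d ^t=>2n/3}. Your closing paragraph is therefore the correct reading of the situation --- within this paper the legitimate ``proof'' is the citation --- and there is no internal argument to compare yours against. Judged as a standalone attempt, your upper bound is essentially complete: taking $S$ to be the $n-1$ subdivision vertices of a spanning tree $T$ works, and the leaf--leaf complication you flag is vacuous, since a tree edge with both endpoints leaves of $T$ forces $n=2$, while for a non-tree edge $v_iv_j$ even a leaf $v_i$ of $T$ lies on a tree edge $v_iv_k$ with $k\neq j$, so $m_{ik}\in S$ dominates $m_{ij}$; no rerouting of $T$ is needed.

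The lower bound, however, has a genuine gap. The edge-cover observation only gives $|S\cap\mathcal{M}|\geq \rho(G)\geq n/2$, and the passage from $n/2$ to $\lceil 2n/3\rceil$ is precisely the content of the theorem; you assert that a discharging scheme with charge cap $3/2$ closes the gap but never exhibit the scheme or verify the cap. The missing argument can be organized as follows. Let $F\subseteq E(G)$ be the edge set underlying $S\cap\mathcal{M}$; since $F$ is an edge cover, the spanning subgraph $(V(G),F)$ has components on $t_1,\dots,t_k\geq 2$ vertices with $\sum t_j=n$, and each component contributes at least $t_j-1$ subdivision vertices to $S$. For $t_j\geq 3$ one has $t_j-1\geq 2t_j/3$ already. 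For $t_j=2$, i.e.\ an isolated cover edge $uv$, the vertex $m_{uv}$ still needs a neighbour in $S$; that neighbour cannot be another subdivision vertex of $F$ (it would share an endpoint with $uv$, contradicting isolation), so it must be $u$ or $v$ taken from $S\cap V(G)$, and such a vertex cannot serve two distinct isolated cover edges. Hence each $t_j=2$ component contributes at least $2\geq 2t_j/3$ private elements of $S$, giving $|S|\geq \sum_j 2t_j/3=2n/3$ and, by integrality, $|S|\geq\lceil 2n/3\rceil$. Without this component-by-component accounting your sketch proves only $\gamma_t(M(G))\geq n/2$.
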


As an immediate consequence of Theorems \ref{2=<chiM_d ^t=<n} and \ref{2n/3 =<  gamma =< n-1}, we have the following result.
\begin{Corollary}\label{chiM_d ^t=>2n/3}
Let $G$ be a connected graph with $n \geq 3$ vertices. Then
$$\chi_d ^t(M(G)) \geq \lceil \frac{2n}{3}\rceil.$$
\end{Corollary}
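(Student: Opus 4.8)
The plan is to simply chain the two cited inequalities, since all the substantive work has already been done in Theorems~\ref{2=<chiM_d ^t=<n} and~\ref{2n/3 =<  gamma =< n-1}. First I would invoke Theorem~\ref{2=<chiM_d ^t=<n}, which asserts in particular that $\gamma_t(M(G)) \leq \chi_d^t(M(G))$ for any connected graph $G$ of order $n \geq 2$; note that the hypothesis $n \geq 3$ of the present statement is stronger than what that theorem requires, so it applies verbatim. Then I would apply Theorem~\ref{2n/3 =<  gamma =< n-1}, whose hypothesis is exactly $n \geq 3$, to get the lower bound $\lceil \tfrac{2n}{3}\rceil \leq \gamma_t(M(G))$.

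Combining these two,
\[
\chi_d^t(M(G)) \;\geq\; \gamma_t(M(G)) \;\geq\; \left\lceil \tfrac{2n}{3}\right\rceil,
\]
which is the desired conclusion. There is genuinely no obstacle here: the corollary is purely a transitivity step, and the only thing to check is that the hypotheses of the two ingredient theorems are both satisfied under the assumption $n \geq 3$ (the connectedness of $G$ being assumed throughout). I would keep the write-up to a single displayed line, as above, explicitly naming the two theorems being composed so the reader can see immediately where each inequality comes from.
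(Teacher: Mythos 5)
Your proof is correct and matches the paper exactly: the corollary is presented there as an immediate consequence of chaining $\chi_d^t(M(G)) \geq \gamma_t(M(G))$ from Theorem~\ref{2=<chiM_d ^t=<n} with $\gamma_t(M(G)) \geq \lceil \frac{2n}{3}\rceil$ from Theorem~\ref{2n/3 =<  gamma =< n-1}. Your check that the hypotheses of both ingredient theorems hold under $n \geq 3$ is the only verification needed.
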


\begin{Problem}
Classify all connected graphs $G$ such that $\chi_d ^t(M(G)) = \lceil \frac{2n}{3}\rceil.$
\end{Problem}


\section{First general results}

We start the section by recalling the notion of an independent set of vertices that is closely related to the notion of coloring.
\begin{Definition}
An \emph{independent set} is a set of vertices in $G$ that no two of which are adjacent. 
A \emph{maximum independent set} is an independent set of largest possible size for a given graph $G$. This size is called the \emph{independence number} of $G$ and it is denoted by $\alpha(G)$.
\end{Definition}
\begin{Lemma}\label{a(M(G))=n}
For any connected graph $G$ of order $n\geq 2$, we have
$$\alpha(M(G))=n.$$
\end{Lemma}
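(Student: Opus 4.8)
The plan is to establish the equality $\alpha(M(G)) = n$ by proving the two inequalities separately. For the lower bound $\alpha(M(G)) \geq n$, I would exhibit an explicit independent set of size $n$ in $M(G)$: namely, the original vertex set $V(G) = \{v_1, \dots, v_n\}$ itself, viewed as a subset of $V(M(G)) = V(G) \cup \mathcal{M}$. This is immediate from the definition of the middle graph, since the only adjacencies in $M(G)$ are between two edge-vertices $m_{ij}, m_{kl}$ (when the corresponding edges share an endpoint) or between a vertex $v_i$ and an edge-vertex $m_{ij}$ incident to it; in particular, no two vertices of $V(G)$ are adjacent in $M(G)$. Hence $V(G)$ is an independent set of $M(G)$ of cardinality $n$.

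For the upper bound $\alpha(M(G)) \leq n$, I would argue that any independent set $I$ of $M(G)$ has $|I| \leq n$. Write $I = I_V \cup I_{\mathcal M}$ where $I_V = I \cap V(G)$ and $I_{\mathcal M} = I \cap \mathcal{M}$. The key observation is that $I_{\mathcal M}$, as a subset of $\mathcal M$, corresponds to a set of edges of $G$ that are pairwise non-adjacent in $G$ (because $m_{ij}$ and $m_{kl}$ are adjacent in $M(G)$ precisely when the edges $v_iv_j$ and $v_kv_l$ are adjacent in $G$); that is, $I_{\mathcal M}$ corresponds to a matching $F$ in $G$. Moreover, since $I$ is independent, for every $m_{ij} \in I_{\mathcal M}$ neither $v_i$ nor $v_j$ lies in $I_V$. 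So if we let $U \subseteq V(G)$ be the set of endpoints of edges in the matching $F$, then $|U| = 2|I_{\mathcal M}|$ and $I_V \subseteq V(G) \setminus U$, giving $|I_V| \leq n - 2|I_{\mathcal M}|$. Therefore $|I| = |I_V| + |I_{\mathcal M}| \leq n - 2|I_{\mathcal M}| + |I_{\mathcal M}| = n - |I_{\mathcal M}| \leq n$.

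Combining the two bounds yields $\alpha(M(G)) = n$, as claimed. The argument does not really use connectivity of $G$ beyond ensuring $M(G)$ is non-empty; the only mild subtlety is bookkeeping the correspondence between independent sets of edge-vertices in $M(G)$ and matchings in $G$, and checking that an edge-vertex in $I$ forbids both of its endpoints from $I$ — but this is exactly condition (2) in the definition of the middle graph, so no real obstacle arises. I would write the upper-bound direction carefully since it is the substantive half, while the lower bound is a one-line remark.
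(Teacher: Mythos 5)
Your proposal is correct and follows essentially the same approach as the paper: exhibit $V(G)$ as an independent set of size $n$ for the lower bound, and for the upper bound split an arbitrary independent set into its vertex part and edge part, observing that each edge-vertex in the set excludes its two endpoints. Your version is slightly more careful (explicitly invoking the matching structure to count $2|I_{\mathcal M}|$ excluded endpoints, where the paper settles for the weaker but sufficient bound $|V\cap S|\le n-t$), but the argument is the same.
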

\begin{proof}
Let $S$ be an independent set of $M(G)$. Since $V=\{v_1,\dots,v_n\}$ is  an independent set of $M(G)$, then $\alpha(M(G))\geq n$. On the other hand, consider $M=\{m_{ij}|v_iv_j\in E(G)\}$ and $t=|M\cap S|$. Since if $m_{ij}\in S$, then $v_i,v_j\notin S$, we have $|V\cap S|\leq n-t$, and hence that $|S|=|V\cap S|+|M\cap S|\leq n-t+t= n$. As a consequence, $\alpha(M(G))=n$.
\end{proof}

We can now state several lemmas that will play an important role in the rest of the paper. Assume that $G$ is a simple graph, $V(M(G))=V\cup M$, where $V=V(G)=\{v_1,\dots,v_n\}$, and $M=\{m_{ij}|v_iv_j\in E(G)\}$.

\begin{Lemma}\label{3item}
Let  $f=(V_1, \dots,V_{\ell})$ be a TDC of $M(G)$. Then 
\begin{enumerate}
  \item For any vertex $v_i\in V$, if $v_i\succ_t V_k$ for some $1\leq k\leq \ell$, then $|V_k|=1$ and $V_k=\{m_{ij}\}$ for some $j$.
  \item  For any $m_{ij}\in M$, if $m_{ij}\succ_t V_k$ for some $1\leq k\leq \ell$, then $1\leq|V_k|\leq2$.
  \item For any $1\leq k\leq \ell$, we have $|V_k|\leq \alpha (M(G))=n$.
\end{enumerate}
\end{Lemma}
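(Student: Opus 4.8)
The plan is to argue directly from the local adjacency structure of $M(G)$, using nothing more than the fact that in a proper coloring each color class is a nonempty independent set, together with Lemma~\ref{a(M(G))=n}. First I would record the two neighbourhoods that matter. For $v_i\in V$ the vertex $v_i$ is incident in $G$ only to the edges through $v_i$ and is adjacent to no other vertex of $G$, so $N_{M(G)}(v_i)=\{m_{ij}~|~v_iv_j\in E(G)\}$. For $m_{ij}\in M$ the vertex $m_{ij}$ is incident in $G$ to $v_i$ and $v_j$ and, inside $L(G)$, adjacent exactly to those $m_{kl}$ whose edge $v_kv_l$ shares an endpoint with $v_iv_j$; hence $N_{M(G)}(m_{ij})=\{v_i,v_j\}\cup\{m_{kl}~|~v_kv_l\in E(G),\ \{v_k,v_l\}\cap\{v_i,v_j\}\neq\emptyset\}$.

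For item (1), observe that every vertex of $N_{M(G)}(v_i)$ is an $m_{ij}$ with $v_iv_j\in E(G)$, and any two such vertices $m_{ij},m_{ik}$ correspond to edges of $G$ sharing $v_i$, hence are adjacent in $L(G)\subseteq M(G)$. Thus $M(G)[N_{M(G)}(v_i)]$ is a clique. If $v_i\succ_t V_k$ then $V_k\subseteq N_{M(G)}(v_i)$, and since $V_k$ is independent it meets this clique in at most one vertex; being nonempty, $|V_k|=1$ and $V_k=\{m_{ij}\}$ for some $j$ with $v_iv_j\in E(G)$.

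For item (2), the key step — and the only one requiring real care — is to cover $N_{M(G)}(m_{ij})$ by two cliques. I would set $C_i=\{v_i\}\cup\{m_{ik}~|~v_iv_k\in E(G),\ k\neq j\}$ and $C_j=\{v_j\}\cup\{m_{jk}~|~v_jv_k\in E(G),\ k\neq i\}$, and check from the description of $N_{M(G)}(m_{ij})$ that $N_{M(G)}(m_{ij})=C_i\cup C_j$. Each of $C_i$ and $C_j$ induces a clique: in $C_i$ all $m$-vertices are edges of $G$ through $v_i$, hence pairwise adjacent in $L(G)$, and each is incident to $v_i$ in $G$, hence adjacent to $v_i$ in $M(G)$; symmetrically for $C_j$. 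A graph that is the union of two cliques has independence number at most $2$, so $\alpha\bigl(M(G)[N_{M(G)}(m_{ij})]\bigr)\leq 2$. If $m_{ij}\succ_t V_k$ then $V_k\subseteq N_{M(G)}(m_{ij})$ is a nonempty independent set, whence $1\leq|V_k|\leq 2$. One should just double-check that $m_{ij}$ itself lies in neither $C_i$ nor $C_j$ and that $C_i\cap C_j=\emptyset$; both follow since $G$ is simple, so $v_iv_j$ is the only edge containing both $v_i$ and $v_j$.

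Finally, item (3) is immediate: $V_k$ is an independent set of $M(G)$, so $|V_k|\leq\alpha(M(G))$, and $\alpha(M(G))=n$ by Lemma~\ref{a(M(G))=n}. In summary, the whole argument reduces to two easy clique-cover observations about the neighbourhoods of the vertices in $V$ and in $M$; I expect the identification and verification of $N_{M(G)}(m_{ij})=C_i\cup C_j$ in item (2) to be the most delicate point, although it remains elementary.
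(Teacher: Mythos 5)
Your proposal is correct and follows essentially the same route as the paper: part (1) uses that $N_{M(G)}(v_i)$ is a clique inside $M$, part (2) uses that $N_{M(G)}(m_{ij})$ is covered by the two cliques around $v_i$ and $v_j$ (the paper phrases this as a pigeonhole on three vertices rather than naming the cliques, but it is the same observation), and part (3) invokes Lemma~\ref{a(M(G))=n} exactly as the paper does.
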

\begin{proof} 1) If $v_i\succ_t V_k$, then $V_k \subseteq N_{M(G)}(v_i) \subseteq M$. This implies that if $|V_k|\ge 2$, then there exist $j\ne r$ such that $m_{ij}, m_{ir}\in V_k$, but this is impossible by the definition of TDC.

2) If $m_{ij}\succ_t V_k$, then $V_k \subseteq N_{M(G)}(m_{ij})$. This implies that if $|V_k|\ge 3$, then there exist $s\ne r$ such that $m_{is}, m_{ir}\in V_k$ or there exists $p\ne j$ such that $v_i, m_{ip}\in V_k$. However, both cases are impossible by the definition of TDC.

3) This is a consequence of the definition of independent set and Lemma~\ref{a(M(G))=n}.
\end{proof}

\begin{Lemma}\label{4item}
	Let  $f=(V_1,\dots,V_{\ell})$ be a TDC of $M(G)$. Then  
	\begin{enumerate}
		\item if $|V_i| \geq 3$, then $|CN(V_i)|=0$.
		\item  $\bigcup\limits_{\substack{i=1\\|V_i|\le 2}}^{\ell} CN(V_i)=V(M(G))$. 
	\end{enumerate}
\end{Lemma}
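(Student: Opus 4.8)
The plan is to deduce both parts directly from Lemma~\ref{3item} together with the identity~\eqref{cup CN(V_i)=V(G)}. For part (1), suppose $|V_i|\geq 3$ and, for contradiction, that $CN(V_i)\neq\emptyset$; pick $x\in CN(V_i)$, so $x\succ_t V_i$. The vertex $x$ lies either in $V$ or in $M$. If $x=v_k\in V$, then part (1) of Lemma~\ref{3item} forces $|V_i|=1$, contradicting $|V_i|\geq 3$. If $x=m_{kr}\in M$, then part (2) of Lemma~\ref{3item} forces $|V_i|\leq 2$, again a contradiction. Hence no such $x$ exists and $|CN(V_i)|=0$, which proves part (1). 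I expect this to be entirely routine — it is just a case split feeding into the previous lemma.

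For part (2), the inclusion $\bigcup_{i,\,|V_i|\leq 2} CN(V_i)\subseteq V(M(G))$ is trivial since every $CN(V_i)$ is a subset of $V(M(G))$. For the reverse inclusion, take any $w\in V(M(G))$. By~\eqref{cup CN(V_i)=V(G)} applied to the graph $M(G)$, there is some index $i$ with $w\in CN(V_i)$, i.e. $w\succ_t V_i$. Then $CN(V_i)\neq\emptyset$, so by part (1) we must have $|V_i|\leq 2$. Therefore $w$ lies in the restricted union, giving $V(M(G))\subseteq\bigcup_{i,\,|V_i|\leq 2} CN(V_i)$ and hence equality.

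The only mild subtlety — really the single point worth stating carefully — is that identity~\eqref{cup CN(V_i)=V(G)} is quoted in the excerpt for a general graph $G$ and a TDC of $G$; here it is being invoked with $M(G)$ in the role of $G$, which is legitimate precisely because $f=(V_1,\dots,V_\ell)$ is assumed to be a TDC of $M(G)$. No genuine obstacle is anticipated: the whole argument is a short bookkeeping deduction from Lemma~\ref{3item}(1)--(2) and~\eqref{cup CN(V_i)=V(G)}, and part (1) is the key lemma that makes part (2) immediate.
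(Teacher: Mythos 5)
Your proposal is correct and follows the same route as the paper: part (1) is the contrapositive of Lemma~\ref{3item}(1)--(2) via a case split on whether the common neighbour lies in $V$ or $M$, and part (2) combines relation~\eqref{cup CN(V_i)=V(G)} (applied to $M(G)$) with part (1). The paper states both steps more tersely, but the content is identical.
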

\begin{proof} 1) It is a consequence of the fact that if $v \in V(M(G))$ and $v\succ_t V_k$, then $|V_k|\leq2$, by Lemma~\ref{3item}.

2) It is a direct consequence of relation \eqref{cup CN(V_i)=V(G)} and the previous part of the lemma.
\end{proof}

Given $f=(V_1,\dots,V_{\ell})$ a TDC of $M(G)$, we will denote by $A_i=\{V_k~|~ |V_k|=i\}$.

\begin{Lemma}\label{lemma:A_i} Let  $f=(V_1,\dots,V_{\ell})$ be a TDC of $M(G)$. Then
\begin{enumerate}
\item $|A_1|\geq \lceil \frac{n}{2}\rceil$.
\item $|A_1|+|A_2| \le \ell$.
\end{enumerate}
\end{Lemma}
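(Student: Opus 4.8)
The plan is to prove the two parts separately, using the structural constraints on color classes established in Lemmas~\ref{3item} and~\ref{4item}. For part (2), the inequality $|A_1|+|A_2|\le\ell$ is almost immediate: the families $A_1$ and $A_2$ consist of color classes of size exactly $1$ and exactly $2$ respectively, so they are disjoint subcollections of $\{V_1,\dots,V_\ell\}$, whence $|A_1|+|A_2|\le\ell$. I would state this in one line.

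The substance is in part (1), the bound $|A_1|\ge\lceil n/2\rceil$. Here I would argue as follows. By Lemma~\ref{4item}(2), every vertex of $M(G)$ — in particular every vertex $v_i\in V$ — lies in $CN(V_k)$ for some color class $V_k$ with $|V_k|\le 2$; that is, $v_i\succ_t V_k$ for such a $k$. Now invoke Lemma~\ref{3item}(1): since $v_i\in V$ and $v_i\succ_t V_k$, we must have $|V_k|=1$, so in fact $V_k\in A_1$, and moreover $V_k=\{m_{ij}\}$ for some $j$ with $v_iv_j\in E(G)$. Thus for each $i\in\{1,\dots,n\}$ there is a singleton class $\{m_{i\,j(i)}\}\in A_1$ whose unique element is an edge-vertex incident to $v_i$. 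This defines a map $\varphi\colon V\to A_1$, $v_i\mapsto\{m_{i\,j(i)}\}$. The key observation is that a singleton class $\{m_{rs}\}$ can be the image $\varphi(v_i)$ only if $v_i\in\{v_r,v_s\}$, since $m_{rs}$ is incident in $G$ only to $v_r$ and $v_s$; hence each element of $A_1$ has at most two preimages under $\varphi$. Therefore $n=|V|\le 2|\varphi(V)|\le 2|A_1|$, which gives $|A_1|\ge n/2$, and since $|A_1|$ is an integer, $|A_1|\ge\lceil n/2\rceil$.

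I do not anticipate a serious obstacle; the only point requiring care is the double-counting step — making sure one argues via the fiber sizes of $\varphi$ rather than, say, assuming $\varphi$ injective (which it need not be: a class $\{m_{rs}\}$ can simultaneously totally dominate both $v_r$ and $v_s$). One should also note that all the singleton classes produced lie in $A_1$ by construction, so no class of size $\ge 2$ is inadvertently counted. A minor subtlety worth mentioning is that distinct vertices $v_i$ may be assigned the same class by $\varphi$ precisely when that class is $\{m_{ij}\}$ with both endpoints needing it; the bound $2|A_1|\ge n$ absorbs exactly this.
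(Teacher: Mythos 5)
Your proposal is correct and follows essentially the same route as the paper: part (2) is the trivial disjointness observation, and part (1) uses Lemma~\ref{3item}(1) to force each vertex $v_i\in V$ to be totally dominated by a singleton class $\{m_{ij}\}$, then double-counts using the fact that such a class can serve only $v_i$ and $v_j$. Your write-up via the map $\varphi$ and its fiber sizes is just a more explicit phrasing of the paper's "for every two vertices $v_i, v_j$ there exists at least one $k$ with $|V_k|=1$" step.
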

\begin{proof}
1) Since $f$ is a TDC, then for every $i=1,\dots,n$, there exists $1\le k\le \ell$ such that $v_{i}\succ_t V_k$. By Lemma~\ref{3item}, $V_k=\{m_{ij}\}$ for some $j$. Now since $v_{t}\nsucc_t \{m_{ij}\}$ for $t\neq i,j$, then for every two vertices $v_{i}, v_{j}$ there exists at least one $k$ such that $|V_k|=1$, and hence $|A_1|\geq \lceil \frac{n}{2}\rceil$.
	 
2) It is a direct consequence of $|A_1|+|A_2|\le \sum_{i\ge1}|A_i|= \ell$.
\end{proof}
Since the middle graph $M(G)$ contains the line graph $L(G)$, we can relate their total dominator chromatic numbers.
\begin{Theorem} \label{chi_d^t(M(G))geq chi_d ^t(L(G))}
	  Let $G$ be connected graph of order $n\geq 2$, size $m\ge2$ and $\delta(G)\geq 1$. Then
$$\chi_d^t(M(G))\geq \chi_d ^t(L(G)).$$
\end{Theorem}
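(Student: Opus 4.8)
The plan is to turn any TDC of $M(G)$ into a TDC of $L(G)$ that uses no more colours. Recall that $L(G)=M(G)[\mathcal{M}]$ with $\mathcal{M}=\{m_{ij}\mid v_iv_j\in E(G)\}$, and that under the hypotheses $L(G)$ is connected with $\delta(L(G))\ge 1$ (an isolated vertex of $L(G)$ would be a $K_2$-component of $G$, impossible since $G$ is connected with $m\ge 2$), so $\chi_d^t(L(G))$ is defined. Fix a minimum TDC $f=(V_1,\dots,V_\ell)$ of $M(G)$, put $W_i:=V_i\cap\mathcal{M}$, and call a class $V_k$ \emph{pure} when $V_k\subseteq V$; if $b$ is the number of pure classes, then the nonempty $W_i$ form a proper colouring of $L(G)$ with exactly $\ell-b$ colours (each non-pure class contributes one nonempty $W_i$).

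The second step is to see when this restriction is already a TDC of $L(G)$. For $m_{ij}\in\mathcal{M}$ choose $V_k$ with $m_{ij}\succ_t V_k$; by Lemma~\ref{3item}(2) we have $V_k\subseteq N_{M(G)}(m_{ij})$ and $|V_k|\le 2$. If $V_k\cap\mathcal{M}\ne\emptyset$ then $\emptyset\ne W_k\subseteq N_{M(G)}(m_{ij})\cap\mathcal{M}=N_{L(G)}(m_{ij})$, so $m_{ij}\succ_t W_k$ in $L(G)$. Hence $m_{ij}$ causes trouble only if every class totally dominating it is contained in $V\cap N_{M(G)}(m_{ij})=\{v_i,v_j\}$; call such $m_{ij}$ \emph{bad}. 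I would then establish two structural facts. First, if $m_{ij}$ is bad then $\{m_{ij}\}$ is a class of $f$: since $v_i$ is totally dominated, Lemma~\ref{3item}(1) gives a class $\{m_{ip}\}$ with $v_i\succ_t\{m_{ip}\}$, and if $p\ne j$ then $\{m_{ip}\}$ is a singleton $W$-class with $m_{ip}\in N_{L(G)}(m_{ij})$, contradicting badness. Second, the bad edges form a matching: if $e_{ij}$ and $e_{ik}$ were both bad and shared $v_i$, then by the first fact $\{m_{ik}\}$ is a class of $f$, hence a singleton $W$-class inside $N_{L(G)}(m_{ij})$, again contradicting badness. Writing the bad edges as a matching $e_{i_1j_1},\dots,e_{i_sj_s}$, each $m_{i_tj_t}$ is totally dominated by some class $D_t\subseteq\{v_{i_t},v_{j_t}\}$ (by the trouble analysis above); since the pairs $\{v_{i_t},v_{j_t}\}$ are pairwise disjoint, the $D_t$ are $s$ distinct pure classes of $f$, so $s\le b$.

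The last step is the repair. Since $\delta(L(G))\ge 1$, for each bad edge pick $m^{(t)}\in N_{L(G)}(m_{i_tj_t})$ and recolour every such $m^{(t)}$ with a new colour. The matching property shows each $m^{(t)}$ is distinct from every bad edge-vertex, so these singleton classes are proper and $m_{i_tj_t}\succ_t\{m^{(t)}\}$ now holds. The one genuine check is that no $u\in\mathcal{M}$ loses all its totally dominating classes: if a former totally dominating class of $u$ became empty, it was a subset of $\{m^{(1)},\dots,m^{(s)}\}$, hence contained some $m^{(t)}$, and $W$-domination forces $m^{(t)}\in N_{L(G)}(u)$, so $\{m^{(t)}\}$ is a totally dominating class of $u$ after the recolouring. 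The outcome is a TDC of $L(G)$ with at most $(\ell-b)+s\le\ell$ colours, i.e.\ $\chi_d^t(L(G))\le\chi_d^t(M(G))$.

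I expect the bad vertices to be the main obstacle. The inequality is tight only because the $b$ pure classes "spent" on vertices of $V$ are precisely what pays for repairing the bad vertices, so the heart of the argument is the charging $s\le b$ (which needs both that bad edges form a matching and that each is linked to a distinct pure class) together with the verification that the fresh-colour repair does not destroy total domination anywhere else.
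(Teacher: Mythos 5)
Your proposal is correct and follows essentially the same route as the paper's proof: restrict the coloring to $\mathcal{M}$, identify the edge-vertices totally dominated only by classes inside $V$, and repair them with singleton classes drawn from $L(G)$-neighbours, paying for these with the classes contained in $V$. Your version is in fact somewhat more careful than the paper's, since the matching property of bad vertices and the injection $D_t$ into the pure classes make the charging $s\le b$ explicit, and you verify that the recolouring does not destroy total domination elsewhere.
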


\begin{proof}
To fix the notation, assume that $V(G)=V=\{v_1,\dots,v_n\}$. Then $V(M(G))=V\cup M$, where $M=V(L(G))=\{m_{ij}~|~v_iv_j\in E(G)\}$. Let $f=(V_1,\dots,V_{\ell})$ be a TDC of $M(G)$.
For all $1\le i\le \ell$, consider $W_i=V_i\setminus V$ and $g=(W_1,\dots,W_\ell)$. Then $g$ define a coloring of the line graph $L(G)$.
If for every $m_{ij}\in M$, there exists $i\le k\le\ell$ such that $m_{ij}\succ_t V_k$ and $V_k\cap M\ne\emptyset$, then $g$ is clearly a TDC of $L(G)$. Assume now there exists $m_{ij}\in M$ such that  $m_{ij}\succ_t V_{k_1},\dots, V_{k_r}$, but $V_{k_1}\cup\cdots\cup V_{k_r}\subseteq V$. Since $G$ is connected and $m\ge2$, this implies $N_{M(G)}(m_{ij})\cap M=\{m_1,\dots, m_p\}\ne \emptyset$. For every $1\le j\le p$, assume that $m_j\in V_{t_j}$. If for some $1\le j\le p$, we have $V_{t_j}\cap M\subseteq N_{M(G)}(m_{ij})$, then there is nothing to do (in this case $m_{ij}\succ_t W_{t_j}$). Assume now that for all $1\le j\le p$, we have $V_{t_j}\cap M\nsubseteq N_{M(G)}(m_{ij})$. This implies that $|V_{t_j}\cap M|\ge 2$. In particular, $V_{t_1}\cap M\nsubseteq N_{M(G)}(m_{ij})$ and $|V_{t_1}\cap M|\ge 2$. Redefine $W_{k_1}=\{m_1\}$ and $W_{t_1}=V_{t_1}\setminus(V\cup\{m_1\})$ (in this way $m_{ij}\succ_t W_{k_1}$). If we apply this procedure to all $m_{ij}\in M$ such that  $m_{ij}\succ_t V_{k_1},\dots, V_{k_r}$ with $V_{k_1}\cup\cdots\cup V_{k_r}\subseteq V$, we have that the obtained $g$ is a TDC of $L(G)$. This implies that $\chi_d^t(M(G))\geq \chi_d ^t(L(G))$.
\end{proof}

\begin{Problem}
Classify all connected graphs $G$ such that $\chi_d^t(M(G))= \chi_d ^t(L(G))$.
\end{Problem}
\section{Middle graph of known families}\label{t80}
In this section, we calculate the total dominator chromatic number of the middle graph of several known families of graphs. 
\begin{Theorem}\label{theo:TDCstargraph}  For any star graph $K_{1,n}$ on $n+1$ vertices, with $n\geq 3$, 
\[
\chi_d^t(M(K_{1,n}))=n+1.
\]
\end{Theorem}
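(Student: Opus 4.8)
The plan is to prove the two matching inequalities $\chi_d^t(M(K_{1,n})) \ge n+1$ and $\chi_d^t(M(K_{1,n})) \le n+1$ separately. I would fix notation by writing $V(K_{1,n}) = \{v_0, v_1, \dots, v_n\}$ with $v_0$ the center and $v_1,\dots,v_n$ the leaves, so that $E(K_{1,n}) = \{v_0v_i \mid 1\le i\le n\}$ and $V(M(K_{1,n})) = \{v_0,v_1,\dots,v_n\} \cup \{m_{0i}\mid 1\le i\le n\}$. The structural fact to record first is that in $M(K_{1,n})$ the center $v_0$ is adjacent to every $m_{0i}$, and since all edges of $K_{1,n}$ are pairwise adjacent in $K_{1,n}$, the vertices $m_{01},\dots,m_{0n}$ also form a clique in $M(K_{1,n})$; hence $\{v_0,m_{01},\dots,m_{0n}\}$ induces a copy of $K_{n+1}$ in $M(K_{1,n})$, while each leaf $v_i$ has degree one there, being adjacent only to $m_{0i}$.

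For the lower bound I would invoke Theorem~\ref{2=<chiM_d ^t=<n}: since $K_{n+1}$ is a subgraph of $M(K_{1,n})$, we get $\chi_d^t(M(K_{1,n})) \ge \chi(M(K_{1,n})) \ge \chi(K_{n+1}) = n+1$.

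For the upper bound I would exhibit an explicit TDC with exactly $n+1$ color classes. By Lemma~\ref{a(M(G))=n} (or directly from the definition of the middle graph) the set $\{v_0,v_1,\dots,v_n\}$ is independent in $M(K_{1,n})$, so setting $V_0 = \{v_0,v_1,\dots,v_n\}$ and $V_i = \{m_{0i}\}$ for $1\le i\le n$ yields a proper coloring $f = (V_0,V_1,\dots,V_n)$ with $n+1$ colors. To verify that $f$ is a TDC I would check the three kinds of vertices: $v_0 \succ_t V_1 = \{m_{01}\}$ since $v_0m_{01}\in E(M(K_{1,n}))$; each leaf $v_i \succ_t V_i = \{m_{0i}\}$; and each $m_{0i}$ satisfies $m_{0i} \succ_t V_j = \{m_{0j}\}$ for any $j\ne i$, which exists because $n\ge 2$, since the $m_{0j}$ are pairwise adjacent. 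Hence $\chi_d^t(M(K_{1,n}))\le n+1$, and combined with the lower bound this gives the claimed equality.

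Both inequalities are elementary, so I do not anticipate a genuine obstacle; the only points needing care are the verification that the original vertex set is independent in the middle graph and that every subdivision vertex is totally dominated by some singleton color class — this last step is where the hypothesis $n\ge3$ enters (in fact only $n\ge2$ is used, so that a second subdivision vertex $m_{0j}$ exists, the case $n=2$ being $M(P_3)$, handled elsewhere). It is worth remarking that fewer than $n+1$ colors is impossible precisely because of the clique $K_{n+1}$ used in the lower bound, so the displayed coloring is optimal.
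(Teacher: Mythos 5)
Your proof is correct and takes essentially the same route as the paper: the lower bound comes from the clique $\{v_0, m_{01},\dots,m_{0n}\}\cong K_{n+1}$ inside $M(K_{1,n})$, and the upper bound from the explicit coloring with the $n$ singletons $\{m_{0i}\}$ plus the independent set $V(K_{1,n})$ as the last class. Your extra verification that the coloring is a TDC (and the observation that only $n\ge 2$ is really needed) is sound but does not change the argument.
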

\begin{proof}
To fix the notation, assume $V(K_{1,n})=\{v_0,v_1,\dots,v_n\}$ and $E(K_{1,n})=\{v_0v_1,\dots,v_0v_n\}$. Then $V(M(K_{1,n}))=V(K_{1,n})\cup M$ where $M=\{m_i~|~1\leq i\leq n\}$. Let $f=(V_1,\dots,V_{\ell})$ be a minimal TDC of $M(K_{1,n})$. Since $K_{n+1}\cong M(K_{1,n})[v_0,m_1,\dots, m_n] \subseteq M(K_{1,n})$, we have $\ell\geq n+1$. Consider $V_i=\{m_i\}$ for $1\leq i\leq n$, $V_{n+1}=V(K_{1,n})$ and $g=(V_1,\dots,V_{n+1})$. By construction, $g$ is a TDC of $ M(K_{1,n})$, and hence, $\chi_d^t(M(K_{1,n}))=n+1$.
\end{proof}

\begin{Theorem} \label{chi_d^t(M(S_{1,n,n}))=2n+1}
	  For any double star graph $S_{1,n,n}$ on $2n+1$ vertices, with $n\geq 1$, 
\[
\chi_d^t(M(S_{1,n,n}))=2n+1.
\]
\end{Theorem}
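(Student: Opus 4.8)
The plan is to compute $\chi_d^t(M(S_{1,n,n}))$ by establishing matching lower and upper bounds of $2n+1$. First I would fix notation for the double star: let $S_{1,n,n}$ have central edge $v_0w_0$, leaves $v_1,\dots,v_n$ adjacent to $v_0$, and leaves $w_1,\dots,w_n$ adjacent to $w_0$. Then $V(M(S_{1,n,n})) = V(S_{1,n,n}) \cup M$, where $M$ consists of the subdivision vertices $a_i = m_{0i}$ (on edge $v_0v_i$), $b_i = m_{0i}'$ (on edge $w_0w_i$), and $c = m_{v_0w_0}$ (on the central edge). Note $M(S_{1,n,n})[\{v_0, a_1,\dots,a_n, c\}] \cong K_{n+2}$ and similarly for the $w$-side.

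**For the upper bound**, I would exhibit an explicit TDC with $2n+1$ classes. A natural candidate: singleton classes $\{a_i\}$ for $1 \le i \le n$ and $\{b_i\}$ for $1 \le i \le n$ (that is $2n$ classes), plus one more class containing all the original vertices $\{v_0, w_0, v_1,\dots,v_n,w_1,\dots,w_n\}$ together with $c$ — but one must check this last set is independent in $M(G)$: the $v_i$'s and $w_i$'s are mutually non-adjacent, $v_0$ and $w_0$ are non-adjacent in $M(G)$ (their connecting edge became the vertex $c$), and $c$ is adjacent to $v_0$ and $w_0$ in $M(G)$, so $c$ cannot go in that class. So instead I would use class $V_{2n+1} = \{v_0,w_0,v_1,\dots,v_n,w_1,\dots,w_n\}$ and observe that $a_i \succ_t V_{2n+1}$? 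No — $V_{2n+1}$ has too many vertices. The correct choice: I would check that each vertex is totally dominated. A cleaner scheme: put $\{a_i\}$, $\{b_i\}$ as singletons, let one class be $\{v_1,\dots,v_n,w_0\}$, another be $\{w_1,\dots,w_n,v_0\}$, and handle $c$; then $v_i \succ_t \{a_i\}$, $a_i \succ_t \{v_i,\dots\}$ needs care. I expect the working scheme is: $2n$ singletons $\{a_i\},\{b_i\}$, one class $\{v_0,w_0\}\cup\{\text{nothing else}\}$? That is size $2$ and independent; then $c \succ_t \{v_0,w_0\}$, each $a_i$ is totally dominated by... We need $v_0$ in a class by itself or paired so some $a_i$ sees a whole class — but $a_i$'s neighbors are $v_0, v_i, c$, and the other $a_j$'s, so $\{a_i\}$ won't be dominated unless we're careful. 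Ultimately I would put $c$ in its own singleton so that $v_0, w_0 \succ_t \{c\}$, use singletons $\{a_1\},\dots,\{a_n\},\{b_1\},\dots,\{b_n\}$ so that $v_i \succ_t\{a_i\}$, $w_i\succ_t\{b_i\}$, and then one remaining class $\{v_1,\dots,v_n,w_1,\dots,w_n\}\cup\{v_0\text{ or }w_0?\}$ — that gives $2n+2$ classes unless we merge. This counting pressure is exactly why the answer is $2n+1$ and not smaller, so the upper bound construction needs one genuinely clever merge (e.g. placing $v_0$ together with all $w_i$'s and $w_0$ together with all $v_i$'s, using $2n$ singletons for the $M$-vertices and the vertex $c$ absorbed into one of those two classes — check independence and total domination carefully).

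**For the lower bound**, I would argue as follows. By Lemma~\ref{3item}, in any TDC each $a_i$ (for $i$ with $v_i$ a leaf) has the property that $v_i \succ_t V_k$ forces $V_k = \{m_{ij}\}$ for the unique edge at $v_i$, i.e. $V_k = \{a_i\}$; similarly each leaf $w_i$ forces the singleton class $\{b_i\}$ to appear. That already gives $2n$ distinct singleton classes $\{a_1\},\dots,\{a_n\},\{b_1\},\dots,\{b_n\}$. None of these $2n$ classes can totally dominate $c$, $v_0$, or $w_0$ or any leaf other than via itself, so at least one additional class is needed to totally dominate, say, $c$ — giving $\ell \ge 2n+1$. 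Making the ``at least one more class'' rigorous is the main obstacle: I must rule out that $c$, $v_0$, $w_0$ are all totally dominated purely by the $2n$ forced singletons. Since $a_i \not\in N(c)$ in general ($a_i$ is adjacent to $c$? yes: $a_i$ and $c$ are adjacent edges of $G$, so $a_i \sim c$ in $L(G) \subseteq M(G)$) — so actually $c$ IS adjacent to every $a_i$ and every $b_i$. Hmm, then $c \succ_t \{a_i\}$ for each $i$, so $c$ is fine. The pressure instead comes from $v_0$ and $w_0$: $v_0$'s neighbors are $v_0$'s incident edges $\{a_1,\dots,a_n,c\}$ only, and the forced singletons $\{a_i\}$ do totally dominate $v_0$. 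So where does the $+1$ come from? It must come from a proper-coloring/independence obstruction on the original vertices $v_1,\dots,v_n,w_1,\dots,w_n, v_0, w_0$: these $2n+2$ vertices together with $c$ must be properly colored, and the forced singleton classes $\{a_i\},\{b_i\}$ contain none of them, so we need enough extra classes to color $\{v_0,\dots,v_n,w_0,\dots,w_n,c\}$ — this subgraph is a path-like/star-like structure and I would show its relevant coloring number (subject to TDC constraints, not just properness) forces at least one more class but no more than one more, via a parity/counting argument on $|A_1|$ from Lemma~\ref{lemma:A_i} combined with Theorem~\ref{2n/3 =< gamma =< n-1} applied to the order $n' = 2n+1$ of $S_{1,n,n}$. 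I expect the tightest route is: show $\chi_d^t(M(S_{1,n,n})) \ge \gamma_t(M(S_{1,n,n})) + 1$ isn't quite it either; rather, directly count — the $2n$ forced singletons plus the fact that $\{v_1,\dots,v_n,w_1,\dots,w_n\}$ cannot all share a color class with both $v_0$ and $w_0$ simultaneously while $c$ must also be colored, yielding the sharp $2n+1$. The delicate bookkeeping of exactly which vertices can coexist in the non-singleton classes is the crux, and I would organize it by first proving $|A_1| \ge 2n$ (the forced singletons) and then showing one cannot have $\ell = 2n$ because that would force every non-forced vertex into the singletons, contradiction.
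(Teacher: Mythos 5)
There is a genuine gap, and it occurs before the proof even starts: you have misidentified the graph. In this paper $S_{1,n,n}$ is \emph{not} the two-center double star. The statement says $S_{1,n,n}$ has $2n+1$ vertices, whereas your graph (centers $v_0,w_0$ joined by an edge, each with $n$ pendant leaves) has $2n+2$. The paper's $S_{1,n,n}$ is the spider with a single center $v_0$, edges $v_0v_i$ and $v_iv_{n+i}$ for $1\le i\le n$ (so $n$ legs of length $2$, diameter $4$ --- see the Remark after Theorem~\ref{chi_d^t(M(T))=n}). The discrepancy is not cosmetic: for your graph the answer is $2n+2$ when $n\ge 2$, not $2n+1$. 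Indeed, with your notation the $2n$ classes $\{a_i\},\{b_i\}$ are forced as singletons by the leaves, they already totally dominate every vertex (as you yourself observed when you could not locate the source of the ``$+1$''), and the remaining vertices $v_0,w_0,c$ and the leaves need two further independent classes because $c$ is adjacent to both $v_0$ and $w_0$. Your puzzlement at ``where does the $+1$ come from?'' is precisely the symptom of working with the wrong graph.

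For the correct graph the argument is shorter and both of your difficulties disappear. The set $\{v_0,m_1,\dots,m_n\}$ (where $m_i$ subdivides $v_0v_i$) induces $K_{n+1}$, giving $n+1$ pairwise distinct classes; each outer leaf $v_{n+i}$ has unique neighbour $m_{i(n+i)}$, forcing $n$ further singleton classes disjoint from the previous ones; hence $\ell\ge 2n+1$. For the upper bound, the $2n$ singletons $\{m_i\}$, $\{m_{i(n+i)}\}$ together with the single class $V(S_{1,n,n})$ (which is independent in the middle graph) form a TDC --- no ``clever merge'' is needed. Even setting aside the wrong graph, your write-up leaves both bounds unfinished: the upper-bound construction ends with an unverified candidate, and the lower bound ends with a promise of ``delicate bookkeeping'' rather than an argument, so the proposal would not stand as a proof even of the statement you intended to prove.
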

\begin{proof}
To fix the notation, assume that $V(S_{1,n,n})=\{v_0,v_1,\dots,v_{2n}\}$ and $E(S_{1,n,n})=\{v_0v_i,v_iv_{n+i}~|~1\le i\le n\}$. 
Then $V(M(S_{1,n,n}))=V(S_{1,n,n})\cup M$, where $M=\{m_{i}, m_{i(n+i)}~|~1 \leq i\leq n\}$. 
Let $f=(V_1, \dots, V_{\ell})$ be a TDC of $M(S_{1,n,n})$. Since the subgraph of $ M(S_{1,n,n})$ induced by $\{m_i~|~ 1\leq i\leq n\}\cup \{v_0\}$ is isomorphic to a complete graph of order $n+1$, then $\chi_d^t(M(S_{1,n,n}))\geq n+1$. Without loss of generality, we can assume that $m_{i} \in V_i$, for $1 \leq i \leq n$, and $v_{0} \in V_{n+1}$.

Now since $N_{M(S_{1,n,n})}(v_{n+i})=\{m_{i(n+i)}\}$, then each $m_{i(n+i)}$ belong to a color class composed of only one element.
This implies that we need at least another $n$ colors for a total dominator coloring of $S_{1,n,n}$. Hence $\chi_d^t(M(S_{1,n,n}))\geq 2n+1$.

 On the other hand, since  $g=(V_1,\dots,V_{2n+1})$, where $V_i=\{m_i\}$,  $V_{n+i}=\{m_{i(n+i)}\}$  for $1\leq i\leq n$ and $V_{2n+1}=V(S_{1,n,n})$, is a TDC of $M(S_{1,n,n})$, then $\chi_d^t(M(S_{1,n,n}))\leq 2n+1$ and hence $\chi_d^t(M(S_{1,n,n}))=2n+1$.
\end{proof}

\begin{Theorem} \label{chi_d^t(M(P))} For any path $P_n$ of order $n\geq 3$, 
$$
\chi_d^t(M(P_n))=
\begin{cases}
n& \text{ if $3\leq n\leq 7$}\\ 
n-1& \text{ if $n=8$}\\ 
\lceil \frac{2n}{3}\rceil+2 & \text{ otherwise} 
\end{cases}
$$
\end{Theorem}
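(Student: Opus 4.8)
The plan is to prove the formula for $\chi_d^t(M(P_n))$ in two directions: a lower bound from the structural lemmas of Section~3 together with Corollary~\ref{chiM_d ^t=>2n/3}, and a matching upper bound by exhibiting explicit total dominator colorings. First I would fix the standard notation: write $V(P_n)=\{v_1,\dots,v_n\}$, $E(P_n)=\{v_iv_{i+1}\mid 1\le i\le n-1\}$, so $M=\{m_{i(i+1)}\mid 1\le i\le n-1\}$ and $M(P_n)$ is a caterpillar-like graph in which the vertices $v_1,v_n$ have degree $1$ (each adjacent only to $m_{12}$, resp.\ $m_{(n-1)n}$) and the $m$-vertices form a path $m_{12}m_{23}\cdots m_{(n-1)n}$ with pendant-ish attachments. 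The key local observation, from Lemma~\ref{3item}, is that every leaf $v_1$ forces its unique neighbour $m_{12}$ to sit in a singleton color class, and similarly for $v_n$; more generally any vertex of $M(P_n)$ that is totally dominated lies in a class of size $1$ or $2$.

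For the small cases $3\le n\le 7$ and $n=8$ I would simply do a finite check: Corollary~\ref{chiM_d ^t=>2n/3} already gives $\chi_d^t(M(P_n))\ge \lceil 2n/3\rceil$, and for these small $n$ one refines this using Lemma~\ref{lemma:A_i} (so $|A_1|\ge\lceil n/2\rceil$) and the forced singletons at the two ends, then matches it with an explicit coloring. For instance for $3\le n\le 7$ a coloring with $n$ classes is obtained by taking $V(P_n)$ (which is independent in $M(P_n)$ by Lemma~\ref{a(M(G))=n}) as one class and distributing the $n-1$ vertices of $M$ among $n-1$ further classes, some singletons and some pairs of non-adjacent $m$-vertices, arranged so that each $v_i$ is totally dominated by a singleton $\{m_{i(i+1)}\}$ or $\{m_{(i-1)i}\}$ and each $m$-vertex is totally dominated by some class contained in its neighbourhood; the lower bound $\chi_d^t(M(P_n))\ge n$ here comes from carefully counting the singleton classes needed to totally dominate all the $v_i$'s together with $\chi(M(P_n))=3$ for $n\ge 3$ small. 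The case $n=8$ is the exceptional ``one-better'' value and needs its own explicit $7$-coloring plus an argument that $6$ is impossible.

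For the generic case $n\ge 9$ with answer $\lceil 2n/3\rceil+2$, the upper bound is the constructive heart: I would build a TDC by selecting a total dominating set of $M(P_n)$ of size $\lceil 2n/3\rceil$ realizing the lower bound of Theorem~\ref{2n/3 =<  gamma =< n-1}, chosen to consist (except near the two ends) of a periodic pattern of $m$-vertices with period $3$, make those into singleton and size-$2$ color classes so that every vertex of $M(P_n)$ is totally dominated, and then color the remaining uncolored vertices — all of which can be arranged to form at most two extra independent classes (essentially one class of $v$'s and one class of leftover $m$'s). This gives $\chi_d^t(M(P_n))\le \lceil 2n/3\rceil+2$. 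The lower bound $\chi_d^t(M(P_n))\ge \lceil 2n/3\rceil+2$ is the main obstacle: Corollary~\ref{chiM_d ^t=>2n/3} only gives $\lceil 2n/3\rceil$, so one must gain two extra classes. The idea is that the classes used to totally dominate all vertices essentially force a total dominating set among the ``small'' classes ($|V_i|\le2$, by Lemma~\ref{4item}(2)), costing at least roughly $\gamma_t(M(P_n))/2$ classes from the size-$\le2$ part, while simultaneously the independence/coloring constraints prevent packing the remaining vertices efficiently; combining the count $|A_1|\ge\lceil n/2\rceil$ from Lemma~\ref{lemma:A_i} with the number of size-$2$ and larger classes needed to cover the non-leaf $m$-vertices, and handling the forced behaviour at the two ends, should yield the extra $+2$.

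I expect the delicate part to be making the lower-bound counting tight for all $n\ge 9$ simultaneously (rather than $n=8$), i.e.\ showing no TDC can do better than $\lceil 2n/3\rceil+2$; this requires a careful case analysis of how the $v_i$'s are distributed among singleton classes versus sitting in the big independent class $V(P_n)$, and how that interacts with totally dominating the path of $m$-vertices. The constructive upper bounds, by contrast, are routine once the periodic pattern is written down and checked on the boundary segments.
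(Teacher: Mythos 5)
Your overall architecture matches the paper's: finite verification for $3\le n\le 8$, and for $n\ge 9$ the upper bound $\lceil \frac{2n}{3}\rceil+2$ via a minimum total dominating set of $M(P_n)$ together with the fact that the uncovered part decomposes into copies of $K_1$ and $P_3$ (the paper gets this directly from Theorem~\ref{chi_d ^t=<gamma+min chi} and the structure of minimum TDSs from \cite{KPPT}, which is essentially your periodic construction). The leaf observation ($v_1$ forces $\{m_{12}\}$ to be a singleton class) is also correct and is used in the paper.

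However, there is a genuine gap in your lower bound for $n\ge 9$, which you yourself flag as ``the main obstacle'' but never actually close. The counting you propose cannot work: Lemma~\ref{lemma:A_i} gives $|A_1|\ge\lceil n/2\rceil$ and your ``roughly $\gamma_t/2$'' estimate adds at most a few more classes, but for $n=9$ the target is $\lceil 18/3\rceil+2=8$ while $\lceil 9/2\rceil=5$, so this style of global counting falls well short of $\lceil\frac{2n}{3}\rceil+2$. The paper's argument is of a different nature: assuming $\ell=\lceil\frac{2n}{3}\rceil$ (and then $\ell=\lceil\frac{2n}{3}\rceil+1$), it first arranges that a minimum TDS $S\subseteq M$ of size $\lceil\frac{2n}{3}\rceil$ has its elements in pairwise distinct color classes, observes that a leftover $P_3$ component $M(P_n)[v_r,m_{r(r+1)},v_{r+1}]$ forces at least two color classes of size $\ge 2$, and then exploits the local rigidity that if a class $V_p$ containing $m_{i_p(i_p+1)}\in S$ has size $\ge 2$, then both flanking vertices $m_{(i_p-1)i_p}$ and $m_{(i_p+1)(i_p+2)}$ must lie in singleton classes (otherwise $v_{i_p}$ or $v_{i_p+1}$ is not totally dominated), which forces $|S|>\lceil\frac{2n}{3}\rceil$, a contradiction. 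This ``enlarging a class destroys total domination of its neighbours and inflates $S$'' mechanism is the actual engine of the $+2$, and it is absent from your proposal. Separately, your treatment of $3\le n\le 8$ as a routine finite check understates the work (the paper's eliminations for $n=7,8$ each require a multi-step case analysis), though in principle a finite check is a legitimate substitute there.
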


\begin{proof}
Assume $V=V(P_n)=\{v_1,\dots,v_n\}$. Then $V(M(P_n))=V\cup M$, where
$M=\{m_{i(i+1)}|1\leq i\leq n-1\}$. Notice that if $f=(V_1,\dots, V_\ell)$ is a TDC of $M(P_n)$, then we can always assume that $V_1=\{m_{12}\}$ and $V_2=\{m_{(n-1)n}\}$. Moreover, since defining $V_i=\{m_{(i-1)i}\}$, for $i=3,\dots,n-1$, and $V_n=V$ always gives a TDC of $M(P_n)$, then $\chi_d^t(M(P_n))\le n$

Assume first that $n=3$. Since the induced subgraph $M(P_3)[m_{12},v_2,m_{23}]$ is isomorphic to $K_3$, then $\chi_d^t(M(P_3))\ge3$, 
and hence$\chi_d^t(M(P_3))=3$. 

Assume that $n=4$. By Corollary~\ref{chiM_d ^t=>2n/3}, $\chi_d^t(M(P_4))\ge3$. If $\chi_d^t(M(P_4))=3$, this would force $V_3=V\cup\{m_{23}\}$, but this is impossible. This implies that $\chi_d^t(M(P_4))=4$. 

Fix $n=5$. By Corollary~\ref{chiM_d ^t=>2n/3}, $\chi_d^t(M(P_5))\ge4$. If $\chi_d^t(M(P_5))=4$, then we can assume $m_{23}\in V_3$ and $m_{34}\in V_4$, but then we could not color $v_3$. This implies that $\chi_d^t(M(P_5))=5$.  

When $n=6$, by Corollary~\ref{chiM_d ^t=>2n/3}, $\chi_d^t(M(P_6))\ge4$. If $\chi_d^t(M(P_6))=4$, then we can assume $m_{23}\in V_3$ and $m_{34}\in V_4$, but then we could not color $v_3$. This implies that $\chi_d^t(M(P_6))\ge5$. If $\chi_d^t(M(P_6))=5$, then we can assume $m_{23}\in V_3$, $m_{34}\in V_4$ and $v_3\in V_5$. However, this implies that $m_{56}\nsucc_t V_i$ for all $i=1,3,4,5$, but this is impossible, and hence, $\chi_d^t(M(P_6))=6$.

Consider $n=7$. By Corollary~\ref{chiM_d ^t=>2n/3}, $\chi_d^t(M(P_7))\ge5$. If $\chi_d^t(M(P_7))=5$, then we can assume $m_{23}\in V_3$, $m_{34}\in V_4$ and $v_3\in V_5$. This implies that $m_{67}\nsucc_t V_i$ for all $i=1,3,4,5$, but this is impossible, and hence, $\chi_d^t(M(P_7))\ge 6$. If $\chi_d^t(M(P_7))=6$, then we can assume $m_{23}\in V_3$, $m_{34}\in V_4$ and $v_3\in V_5$, and hence that $m_{67}\succ_t V_6$. This forces $m_{45}\in V_3\cup V_5$. If $m_{45}\in V_3$, then $m_{12}\nsucc_t V_i$ for all $i=2,\dots, 6$, and hence $m_{45}\in V_5$. As a consequence, $v_4\in V_3$, but this implies $m_{12}\nsucc_t V_i$ for all $i=2,\dots, 6$. This shows that $\chi_d^t(M(P_7))=7$.

Assume that $n=8$. By Corollary~\ref{chiM_d ^t=>2n/3}, $\chi_d^t(M(P_8))\ge6$. If $\chi_d^t(M(P_8))=6$, then we can assume $m_{23}\in V_3$, $m_{34}\in V_4$ and $v_3\in V_5$, and hence that $m_{78}\succ_t V_6$. This forces $m_{45}\in V_3\cup V_5$. If $m_{45}\in V_3$, then $m_{12}\nsucc_t V_i$ for all $i=2,\dots, 6$, and hence $m_{45}\in V_5$. As a consequence, $v_4\in V_3$, but this implies $m_{12}\nsucc_t V_i$ for all $i=2,\dots, 6$. This shows that $\chi_d^t(M(P_8))\ge7$. On the other hand, if we consider $V_1=\{m_{12}\}$, $V_2=\{m_{78}\}$, $V_3=\{m_{23}\}$, $V_4=\{m_{34},m_{56}\}$, $V_5=\{m_{45}\}$, $V_6=\{m_{67}\}$ and $V_7=V$, then $f=(V_1,\dots, V_7)$ is a TDC of $M(P_8)$, and hence, $\chi_d^t(M(P_8))=7$.

Finally, consider the case $n\ge 9$. By the description of total dominating sets from \cite{KPPT}, we have that $\gamma_t(M(P_n))=\lceil \frac{2n}{3}\rceil$ and that if $S$ is a total dominating set of $M(P_n)$, then $M(P_n)[V\setminus S]$ is the disjoint union of graphs isomorphic to $K_1$ and $P_3$. This fact together with Theorem~\ref{chi_d ^t=<gamma+min chi} and Corollary~\ref{chiM_d ^t=>2n/3} implies that 
$$\lceil \frac{2n}{3}\rceil +2\ge \chi_d^t(M(P_n))\ge \lceil \frac{2n}{3}\rceil. $$ 
Consider $f=(V_1,\dots,V_\ell)$ be a minimal TDC of $M(P_n)$, and $S$ a minimal total dominating set of $M(P_n)$. Notice that $|S|=\lceil \frac{2n}{3}\rceil$. By \cite[Lemma 2.1]{KPPT}, we can assume that $S=\{m_{i_1(i_1+1)},\dots, m_{i_{\lceil \frac{2n}{3}\rceil}(i_{\lceil \frac{2n}{3}\rceil}+1)}\}\subseteq M$,  and that each element of $S$ belong to a different color class. Without loss of generalities, assume that $m_{i_j(i_j+1)}\in V_j$.

Suppose that $\ell=\lceil \frac{2n}{3}\rceil$.
Since $n\ge9$, then $n-1> \lceil \frac{2n}{3}\rceil$, and the induced subgraph $M(P_n)[V\setminus S]$ has a subgraph $G$ isomorphic to $P_3$ of the form $M(P_n)[v_r,m_{r(r+1)},v_{r+1}]$, for some $r$. Since $G$ needs at least two colors, this implies that $|\{V_i~|~|V_i|\ge2\}|\ge2$. 
Assume that $V_{p}$ and $V_{s}$ have cardinality bigger than $2$ for some $1\le p<s\le \ell$. This implies that $m_{(i_p-1)i_p}$, $m_{(i_p+1)(i_p+2)}$, $m_{(i_s-1)i_s}$ and $m_{(i_s+1)(i_s+2)}$ all belong to color classes made of only one element, otherwise one between $v_{i_p}$, $v_{i_p+1}$, $v_{i_s}$ and $v_{i_s+1}$ is not a common neighbour of any $V_i$. This implies that $m_{(i_p-1)i_p}, m_{(i_p+1)(i_p+2)}, m_{(i_s-1)i_s}, m_{(i_s+1)(i_s+2)}\in S$, and hence that $|S|>\lceil \frac{2n}{3}\rceil$. This implies that $\ell\ge\lceil \frac{2n}{3}\rceil+1$. 

Assume that $\ell=\lceil \frac{2n}{3}\rceil+1$. Let $S'=S\cup\{w\}$, where $w\in V_\ell$. Since $n\ge9$, then $n-1> \lceil \frac{2n}{3}\rceil+1$, and the induced subgraph $M(P_n)[V\setminus S']$ has a subgraph $G$ isomorphic to $P_3$ of the form $M(P_n)[v_r,m_{r(r+1)},v_{r+1}]$, for some $r$. Using the same argument as the case $\ell=\lceil \frac{2n}{3}\rceil$, we have that one between $m_{r(r+1)}$ and $v_r$ belongs to $V_\ell$ and the other to a $V_p$ for some $i\le p\le \ell-1$. In addition, $|V_j|=1$ for all $j\ne \ell, p$, and $m_{(i_p-1)i_p}$ and $m_{(i_p+1)(i_p+2)}$ belong to color classes made of only one element. This implies that $v_{i_p},v_{i_p+1}\in V_\ell$, and hence that one between $m_{(i_p-1)i_p}$ and $m_{(i_p+1)(i_p+2)}$ is not a common neighbour of any $V_i$. This implies that $\ell\ge\lceil \frac{2n}{3}\rceil+2$, and hence that $\chi_d^t(M(P_n))=\lceil \frac{2n}{3}\rceil+2$.
\end{proof}

\begin{Lemma}\label{lemma:TDCcycleandpathcompared} For any $n\ge5$, we have
$$\chi_d^t(M(P_n))\le \chi_d^t(M(C_n))\le n.$$

\end{Lemma}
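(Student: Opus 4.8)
The plan is to establish the two inequalities in the chain $\chi_d^t(M(P_n))\le \chi_d^t(M(C_n))\le n$ separately, since they have very different flavors.

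For the upper bound $\chi_d^t(M(C_n))\le n$, I would argue by exhibiting an explicit TDC of $M(C_n)$ with $n$ color classes. Write $V(C_n)=\{v_1,\dots,v_n\}$ and $\mathcal{M}=\{m_{12},m_{23},\dots,m_{(n-1)n},m_{n1}\}$, so $|V(M(C_n))|=2n$. The natural candidate is to put each subdivision vertex in its own singleton class, $V_i=\{m_{i(i+1)}\}$ for $1\le i\le n-1$ and then use the last class $V_n$ for everything else; but that leaves $m_{n1}$ unaccounted for, so a slightly more careful partition is needed. The idea is to set $V_i=\{m_{i(i+1)}\}$ for $1\le i\le n-2$, then $V_{n-1}=\{m_{(n-1)n},m_{n1}\}$ — wait, these two are adjacent in $L(C_n)$, so that fails; instead I would put $V_{n-1}=\{m_{(n-1)n}\}$ together with one of the original vertices that is not incident to it, and $V_n$ = the remaining vertices. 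Concretely: $V_i=\{m_{i(i+1)}\}$ for $1\le i\le n-2$, $V_{n-1}=\{m_{(n-1)n}\}$, $V_n=\{v_1,\dots,v_n, m_{n1}\}$. One must check this is a proper coloring (the only issue is whether $V_n$ is independent: $m_{n1}$ is adjacent to $v_n$ and $v_1$, so this fails too). So the correct fix is $V_n=\{v_2,\dots,v_{n-1},m_{n1}\}$ and one more singleton for $v_1$ — but that gives $n+1$ classes. The cleanest route is: reuse the path argument. Note $M(C_n)$ contains $M(P_n)$ after deleting $m_{n1}$; take a minimal TDC of that $M(P_n)$ (which has $\le \lceil 2n/3\rceil+2\le n$ classes for $n\ge 9$, and $\le n$ for small $n$ by Theorem~\ref{chi_d^t(M(P))}), and check that adding $m_{n1}$ to an appropriate existing singleton color class — say the one containing $m_{23}$, which is not adjacent to $m_{n1}$ — keeps it proper and still a TDC of $M(C_n)$, since total domination of every vertex of $M(P_n)$ is preserved and the new vertex $m_{n1}$ is totally dominated by the singleton $\{v_1\}$ or $\{v_n\}$ if such exists in the coloring, or by $\{m_{12}\}$ which is a neighbor of $m_{n1}$ in $L(C_n)$. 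One must verify $m_{n1}\succ_t \{m_{12}\}$: indeed $m_{n1}m_{12}\in E(L(C_n))$ since they share vertex $v_1$, and $\{m_{12}\}$ is a singleton class in the path coloring.

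For the lower bound $\chi_d^t(M(P_n))\le \chi_d^t(M(C_n))$, the plan is to take a minimal TDC $f=(V_1,\dots,V_\ell)$ of $M(C_n)$ with $\ell=\chi_d^t(M(C_n))$ and transform it into a TDC of $M(P_n)$ with at most $\ell$ classes. Think of $M(P_n)$ as obtained from $M(C_n)$ by deleting the vertex $m_{n1}$ and then deleting the two edges $m_{n1}m_{12}$ and $m_{n1}m_{(n-1)n}$ — equivalently, $M(P_n)=M(C_n)-m_{n1}$, which is an induced subgraph. Restricting $f$ to $V(M(C_n))\setminus\{m_{n1}\}$ gives a proper coloring $g=(V_1',\dots,V_\ell')$ of $M(P_n)$ where $V_i'=V_i\setminus\{m_{n1}\}$. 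The issue is that some vertex of $M(P_n)$ might have been totally dominated in $M(C_n)$ only by a class that either equalled $\{m_{n1}\}$ or contained $m_{n1}$; the vertices affected are exactly those in $N_{M(C_n)}(m_{n1})=\{v_n,v_1,m_{12},m_{(n-1)n}\}$. For each such vertex we must re-establish total domination by an existing or slightly modified class, using that $v_1,v_n,m_{12},m_{(n-1)n}$ all still have other neighbours in $M(P_n)$ (this is where $n\ge 5$ matters, to avoid degenerate small cases). This mirrors exactly the bookkeeping in the proof of Theorem~\ref{chi_d^t(M(G))geq chi_d ^t(L(G))}: if a vertex $x\in\{v_1,v_n,m_{12},m_{(n-1)n}\}$ had $x\succ_t V_k$ only for classes $V_k$ with $m_{n1}\in V_k$, then since $V_k\subseteq N_{M(C_n)}(x)$ and $|V_k|$ is small by Lemma~\ref{3item} (at most $2$ if $x\in\mathcal M$, and $=1$ with $V_k=\{m_{n1}\}$ if $x\in V$), we can split off $m_{n1}$ and recolor, keeping the count at $\ell$.

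I expect the main obstacle to be the lower-bound direction, specifically the case analysis for re-establishing total domination of the (up to four) vertices in the neighbourhood of the deleted vertex $m_{n1}$: one has to ensure that the recoloring does not break total domination of some other vertex and does not increase the number of classes. The delicate subcase is when $m_{n1}$ sat in a class of size $2$, say $V_k=\{m_{n1},m_{12}\}$ or $V_k=\{m_{n1},m_{(n-1)n}\}$ — impossible, since $m_{n1}$ is adjacent to both of those in $L(C_n)$ — or $V_k=\{m_{n1},x\}$ with $x$ not adjacent to $m_{n1}$; then removing $m_{n1}$ shrinks it to the singleton $\{x\}$, which is fine and can only help. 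When instead $\{m_{n1}\}$ itself was a singleton totally dominating $v_1$ (and possibly $v_n$), I would reassign: $v_1$ also satisfies $v_1\succ_t\{m_{12}\}$ provided $m_{12}$ sits in a singleton class — if not, one uses that $\{m_{12}\}$ can be made singleton at no extra cost by the same split-off trick applied inside $M(P_n)$, exactly as in the proof of Theorem~\ref{chi_d^t(M(G))geq chi_d ^t(L(G))}. Once all four neighbours are handled, $g$ (possibly after these local modifications, with the now-empty class $V_k'$ discarded or absorbed) is a TDC of $M(P_n)$ with at most $\ell$ classes, giving $\chi_d^t(M(P_n))\le\ell=\chi_d^t(M(C_n))$.
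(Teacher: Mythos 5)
Your overall strategy for the left inequality is the paper's: restrict a minimal TDC of $M(C_n)$ to the induced subgraph $M(P_n)=M(C_n)-m_{1n}$ and repair total domination for the neighbours of the deleted vertex. But you miss the one idea that makes this painless. The paper first notes that, because $\chi_d^t(M(C_n))\le n$ while the $n$ subdivision vertices cannot all occupy singleton classes (that would already use $n$ classes and leave none for $V$), some subdivision vertex lies in a class of size at least $2$; by the rotational symmetry of the cycle one may assume it is $m_{1n}\in V_1$ with $|V_1|\ge 2$. Then Lemma~\ref{3item}(1) gives $v_1,v_n\nsucc_t V_1$, so $v_1$ and $v_n$ are already totally dominated by classes not containing $m_{1n}$, while $m_{12}$ and $m_{(n-1)n}$ keep a dominated class after shrinking $V_1$ to $V_1\setminus\{m_{1n}\}\ne\emptyset$; no case analysis is needed. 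By fixing the edge $v_nv_1$ in advance you are instead forced into the subcase where $\{m_{n1}\}$ is a singleton that is the \emph{only} class totally dominating both $v_1$ and $v_n$, while $m_{12}$ and $m_{(n-1)n}$ both sit in classes of size at least $2$: you then need to manufacture two new singletons $\{m_{12}\}$ and $\{m_{(n-1)n}\}$, but deleting $m_{n1}$ frees only one class slot, so your ``split-off'' repair does not close the count. That is exactly the subcase the symmetry argument eliminates.

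For the right inequality your final plan is unsound. You take a \emph{minimal} TDC of $M(P_n)$ and insert $m_{n1}$ into the class containing $m_{23}$, asserting that ``total domination of every vertex of $M(P_n)$ is preserved.'' It is not: any vertex whose only totally dominating class is the one you enlarge, and which is not adjacent to $m_{n1}$, loses total domination. For instance, for $n\ge 9$ the minimal TDC built from a total dominating set can have $v_3$ dominated only by $\{m_{23}\}$, and $v_3$ is not adjacent to $m_{n1}$. Moreover the method cannot be repaired in general, since it would yield $\chi_d^t(M(C_n))\le\chi_d^t(M(P_n))$, which is false for $n=8$: $\chi_d^t(M(P_8))=7$ by Theorem~\ref{chi_d^t(M(P))} while $\chi_d^t(M(C_8))=8$ by Theorem~\ref{chi_d^t(M(C))}. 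The paper avoids all of this by writing down an explicit $n$-class TDC directly: $V_1=\{m_{12}\}$, $V_2=\{m_{23},m_{1n}\}$, $V_i=\{m_{i(i+1)}\}$ for $3\le i\le n-1$, and $V_n=V$; the only vertex that relies on the doubleton $V_2$ is $m_{12}$, which is adjacent to both of its elements. (Your abandoned first attempts were circling precisely this construction.)
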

\begin{proof} To fix the notation, assume $V(P_n)=V(C_n)=V=\{ v_1, \dots, v_n\}$, $E(P_n)=\{v_1v_2, v_2v_3,\dots,v_{n-1}v_n\}$ and $E(C_n)=E(P_n)\cup\{v_1v_n\}$. Then $V(M(P_n))=V\cup \mathcal{M}$, where $\mathcal{M}=\{ m_{i(i+1)}~|~1\leq i \leq n-1 \}$, and $V(M(C_n))=V\cup \mathcal{M}\cup\{m_{1n}\}$.




If we define $V_1=\{m_{12}\}, V_2=\{m_{23},m_{1n}\}$, $V_i=\{m_{i(i+1)}\}$ for all $i=3,\dots, n-1$, and $V_n=V$, then $(V_1,\dots, V_n)$ is a TDC of $M(C_n)$. This shows that $\chi_d^t(M(C_n))\le n$. 
Let $f=(V_1,\dots, V_s)$ be a minimal TDC of $M(C_n)$. Since $s\le n$, without loss of generality, we can assume that $m_{1n}\in V_1$ and $|V_1|\ge2$. Define $g=(V'_1,V_2,\dots, V_s)$, where $V'_1=V_1\setminus\{m_{1n}\}$. By construction $g$ is a coloring of $M(P_n)$. This implies that $v_1,v_n\nsucc_t V_1$. Moreover, if $m_{12}\succ_t V_1$, then $m_{12}\succ_t V'_1$, and similarly for $m_{n(n-1)}$. This implies that $g$ is a TDC of $M(P_n)$, and hence $\chi_d^t(M(C_n))\ge\chi_d^t(M(P_n))$.
\end{proof}


\begin{Theorem} \label{chi_d^t(M(C))} For any cycle $C_n$ of order $n\geq 3$, 
$$
\chi_d^t(M(C_n))=
\begin{cases}
4& \text{ if $ n=3$}\\
n& \text{ if $ n=4, 5$}\\ 
\lceil \frac{2n}{3}\rceil+2 & \text{ otherwise} 
\end{cases}
$$
\end{Theorem}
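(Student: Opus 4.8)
The plan is to establish the three cases of the formula separately, following the same style used for $M(P_n)$ and relying on Lemma~\ref{lemma:TDCcycleandpathcompared} to transfer information between paths and cycles. For the small cases $n=3,4,5$ I would argue directly. When $n=3$, the middle graph $M(C_3)$ has six vertices; the three $m$-vertices $m_{12},m_{23},m_{13}$ together with any single $v_i$ induce a $K_4$, so $\chi_d^t(M(C_3))\ge 4$, and an explicit $4$-coloring (three singleton color classes on the $m_{ij}$ and one class $V=\{v_1,v_2,v_3\}$) shows equality. For $n=4,5$, Lemma~\ref{lemma:TDCcycleandpathcompared} gives $\chi_d^t(M(C_n))\le n$; for the matching lower bound I would combine Corollary~\ref{chiM_d ^t=>2n/3} with a short case analysis analogous to the $P_4$ and $P_5$ arguments in the proof of Theorem~\ref{chi_d^t(M(P))}, ruling out the possibility of a TDC with $n-1$ colors because one of the central $v_i$ could not be totally dominated.

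For the generic case $n\ge 6$, the upper bound $\chi_d^t(M(C_n))\le \lceil\frac{2n}{3}\rceil+2$ should follow from the strategy already used for paths: by \cite{KPPT} we have $\gamma_t(M(C_n))=\lceil\frac{2n}{3}\rceil$, and for a minimum total dominating set $S$ (which can be taken inside $\mathcal M$ by the analogue of \cite[Lemma 2.1]{KPPT}) the induced subgraph $M(C_n)[V(M(C_n))\setminus S]$ is a disjoint union of copies of $K_1$ and $P_3$, which needs at most two colors; Theorem~\ref{chi_d ^t=<gamma+min chi} then yields the bound. The lower bound $\chi_d^t(M(C_n))\ge\lceil\frac{2n}{3}\rceil+2$ is the crux. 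Here I would imitate closely the final part of the proof of Theorem~\ref{chi_d^t(M(P))}: take a minimal TDC $f=(V_1,\dots,V_\ell)$, take a minimum TDS $S\subseteq\mathcal M$ with its elements in distinct color classes, and suppose for contradiction that $\ell=\lceil\frac{2n}{3}\rceil$ or $\ell=\lceil\frac{2n}{3}\rceil+1$. Since $n\ge 6$ forces $n>\lceil\frac{2n}{3}\rceil+1$, after removing $S$ (and one extra vertex in the second case) some $P_3$ of the form $M(C_n)[v_r,m_{r(r+1)},v_{r+1}]$ survives, so at least one (resp.\ at most one, after accounting) color class has size $\ge 2$; analyzing which $m$-vertices adjacent to $v_{i_p},v_{i_p+1}$ must then be singletons, I derive that these extra singletons also lie in $S$, contradicting $|S|=\lceil\frac{2n}{3}\rceil$.

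The one genuinely new wrinkle compared to the path case is the wrap-around edge $v_1v_n$ and the vertex $m_{1n}$: in $C_n$ every $v_i$ has exactly two incident $m$-vertices and there is no "boundary" to exploit, so the counting of forced singletons is slightly more uniform, and I would need to check that the parity/ceiling arithmetic $|S|>\lceil\frac{2n}{3}\rceil$ still goes through when the four forced singletons $m_{(i_p-1)i_p}, m_{(i_p+1)(i_p+2)}, m_{(i_s-1)i_s}, m_{(i_s+1)(i_s+2)}$ are read cyclically (indices mod $n$). I expect the main obstacle to be making this cyclic counting argument airtight — in particular ruling out the degenerate configurations where two of these forced singletons coincide or where the surviving $P_3$'s overlap with $S$ in an unexpected way for small $n$ near $6$ — and possibly having to treat $n=6$ by hand (as was done for $P_6$), since $\lceil\frac{12}{3}\rceil+2=6=n$ there and the generic inequalities collapse. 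Once the lower bound is secured, combining it with the upper bound gives $\chi_d^t(M(C_n))=\lceil\frac{2n}{3}\rceil+2$ for $n\ge 6$, completing the proof.
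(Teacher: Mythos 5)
Your plan diverges from the paper exactly at the crux, and that is where it is still incomplete. The paper does \emph{not} redo the structural counting argument on the cycle: for $n\ge 6$ it combines the upper bound $\chi_d^t(M(C_n))\le \lceil\frac{2n}{3}\rceil+2$ (obtained, as you say, from Theorem~\ref{chi_d ^t=<gamma+min chi} and the TDS description in \cite{KPPT}) with the lower bound $\chi_d^t(M(C_n))\ge\chi_d^t(M(P_n))$ from Lemma~\ref{lemma:TDCcycleandpathcompared}, and simply reads the value off Theorem~\ref{chi_d^t(M(P))}. This settles every $n\ge 6$ \emph{except} $n=8$, since $\chi_d^t(M(P_8))=7$ while $\lceil\frac{16}{3}\rceil+2=8$; the paper then disposes of $M(C_8)$ by a separate hands-on case analysis. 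Note that the exceptional case is therefore $n=8$, not $n=6$ as you suspect (for $n=6,7$ one has $\chi_d^t(M(P_n))=n=\lceil\frac{2n}{3}\rceil+2$, so the comparison is tight). Your alternative --- running the $\ell=\lceil\frac{2n}{3}\rceil$ and $\ell=\lceil\frac{2n}{3}\rceil+1$ contradictions directly on the cycle --- would, if completed, treat $n=8$ uniformly and avoid the patch, which is a real advantage; but you have only named the difficulties (cyclic indices, coinciding forced singletons, overlapping $P_3$'s) without resolving them, so the lower bound for $n\ge 6$, which is the heart of the theorem, is not actually proved in your write-up.

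Two concrete errors in the small cases also need fixing. For $n=3$, the set $\{m_{12},m_{23},m_{13},v_i\}$ does \emph{not} induce a $K_4$ in $M(C_3)$: the vertex $v_i$ is incident to only two of the three edges of $C_3$, hence adjacent to only two of the three $m$-vertices, and indeed $\chi(M(C_3))=3$. The correct route to $\chi_d^t(M(C_3))\ge 4$ (used in the paper for $n=3,4$) is that the $m$-vertices force $\ell\ge 3$, and $\ell=3$ would force $|A_1|=0$, contradicting Lemma~\ref{lemma:A_i}(1). For $n=4$, Lemma~\ref{lemma:TDCcycleandpathcompared} is stated only for $n\ge 5$, so you cannot invoke it for the upper bound there; an explicit $4$-class TDC such as $(\{m_{12}\},\{m_{23},m_{14}\},\{m_{34}\},V)$ is needed instead.
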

\begin{proof} To fix the notation, assume $V(C_n)=V=\{ v_1, \dots, v_n\}$ and $E(C_n)=\{v_1v_2, v_2v_3,\dots,v_{n-1}v_n, v_1v_n\}$. Then $V(M(C_n))=V\cup \mathcal{M}$, where $\mathcal{M}=\{ m_{i(i+1)}~|~1\leq i \leq n-1 \}\cup\{m_{1n}\}$.

Assume that $n=3,4$. Then $M(C_n) \supseteq K_3$, so $\chi_d^t(M(C_n))\ge 3$. If $\chi_d^t(M(C_n))= 3$, then $|A_1|=0$ which is a contradiction by Lemma \ref{lemma:A_i}. So $\chi_d^t(M(C_n))\ge 4$. Now since $(\{m_{12}\},\{m_{23}\},\{m_{13}\},V)$ and $(\{m_{12}\},\{m_{23},m_{14}\},\{m_{34}\},V)$ are TDC of $M(C_3)$ and $M(C_4)$ respectively, we have $\chi_d^t(M(C_n))= 4$.

By Lemma~\ref{lemma:TDCcycleandpathcompared}, if $n=5$ then $\chi_d^t(M(C_5))=5$. 

Assume now that $n\ge 6$ and $n\neq 8$. By the description of total dominating sets from \cite{KPPT}, we have that $\gamma_t(M(C_n))=\lceil \frac{2n}{3}\rceil$ and that we have that if $S$ is a total dominating set of $M(C_n)$, then $M(C_n)[V\setminus S]$ is the disjoint union of graphs isomorphic to $K_1$ and $P_3$. This fact together with Theorem~\ref{chi_d ^t=<gamma+min chi} and Corollary~\ref{chiM_d ^t=>2n/3} implies that 
$$\lceil \frac{2n}{3}\rceil +2\ge \chi_d^t(M(C_n))\ge \lceil \frac{2n}{3}\rceil. $$ 
On the other hand, by Theorem~\ref{chi_d^t(M(P))} and Lemma~\ref{lemma:TDCcycleandpathcompared}
$$\lceil \frac{2n}{3}\rceil +2=\chi_d^t(M(P_n)) \le \chi_d^t(M(C_n)) $$ 
and hence, $\chi_d^t(M(C_n))= \lceil \frac{2n}{3}\rceil+2$, for all $n\ge6$ except for $n=8$.


Finally for $n=8$, by the previous argument, $8\ge\chi_d^t(M(C_8))\ge 7$. Let $f=(V_1,\dots, V_\ell)$ be a minimal TDC of $M(C_n)$. If $\ell= 7$, then there exists $1\le i\le 7$ such that $|V_i\cap M|\ge2$. Without loss of generalities, assume $i=1$ and $m_{12}\in V_1$. Since $f$ is a TDC, then we can assume $V_2=\{m_{18}\}$ and $V_3=\{m_{23}\}$. To finish the proof it is enough to discuss the case when $m_{67}\in V_1$ and when $m_{78}\in V_1$. 
Suppose that $m_{67}\in V_1$. This implies that we can assume $V_4=\{m_{78}\}$ and $V_5=\{m_{56}\}$. As a consequence $V_6=\{m_{34}\}$ or $V_6=\{m_{45}\}$. If $V_6=\{m_{34}\}$, then $V_7=\{m_{45}\}$ or $V_7=\{v_5\}$. However in both cases, we do not have enough color classes to color all remaining vertices. If $V_6=\{m_{45}\}$, then $V_7=\{m_{34}\}$ or $V_7=\{v_3\}$. However in both cases, we do not have enough color classes to color all remaining vertices.
Suppose now that $m_{78}\in V_1$. This implies that we can assume $V_4=\{m_{67}\}$. As a consequence $V_5=\{v_2\}$ or $V_5=\{v_3\}$ or $V_5=\{m_{34}\}$, and similarly, $V_6=\{m_{45}\}$ or $V_6=\{m_{56}\}$. All these cases imply that $v_1,v_8\in V_7$ and that $|V_1|,|V_7|\ge3$. However, this implies that $m_{18}$ is not a common neighbour of any $V_i$.
This shows that $\chi_d^t(M(C_8))=8$
\end{proof}
\begin{Remark} The two inequalities of Lemma~\ref{lemma:TDCcycleandpathcompared} are both strict, by Theorem \ref{chi_d^t(M(C))}. 
\end{Remark}
\begin{Theorem} \label{chi_d^t(M(W))} For any wheel $W_n$ of order $n\geq 4$, 
$$
\chi_d^t(M(W_n))=
\begin{cases}
5 & \text{ if $n=4$}\\ 
n+2 & \text{ if $n\ge 5$}
\end{cases}
$$
\end{Theorem}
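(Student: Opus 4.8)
The plan is to treat $n\ge5$ uniformly and handle $n=4$ (where $W_4\cong K_4$) by an ad hoc argument. Write $V(W_n)=\{v_0,v_1,\dots,v_{n-1}\}$ with $v_0$ the hub adjacent to all $v_i$ and $v_1,\dots,v_{n-1}$ forming the rim cycle $C_{n-1}$, so $V(M(W_n))=V(W_n)\cup\{m_{0i}\}_{i=1}^{n-1}\cup\{m_e : e\text{ a rim edge}\}$, and $\delta(M(W_n))\ge3$. Two structural facts will be used throughout: (i) $T:=\{v_0,m_{01},\dots,m_{0(n-1)}\}$ (the hub together with its incident spoke-subdivisions) induces a $K_n$ in $M(W_n)$, whence $\chi_d^t(M(W_n))\ge\chi(M(W_n))\ge n$; (ii) the subgraph of $M(W_n)$ induced on the rim vertices together with the rim-subdivisions is a copy of $M(C_{n-1})$ (which has chromatic number $3$), the hub $v_0$ is non-adjacent to all of these, and each rim vertex $v_i$ is adjacent in $M(W_n)$ only to $m_{0i}$ and to its two incident rim-subdivisions. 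For the upper bound when $n\ge5$ I would take the $n-1$ singleton classes $\{m_{0i}\}$ together with a proper $3$-colouring of the remaining vertices, which by (ii) induce a copy of $M(C_{n-1})$ plus the isolated vertex $v_0$; this uses $n+2$ colours, and it is a TDC because $v_0$ and every $v_i$ is totally dominated by $\{m_{0i}\}$, each $m_{0i}$ by any $\{m_{0j}\}$ with $j\ne i$, and each rim-subdivision $m_{i(i+1)}$ by $\{m_{0i}\}$ since $m_{0i}\in N_{M(W_n)}(m_{i(i+1)})$. For $n=4$ the classes $V(K_4)$, $\{m_{12},m_{34}\}$, $\{m_{13},m_{24}\}$, $\{m_{14}\}$, $\{m_{23}\}$ form a TDC of $M(K_4)$, so $\chi_d^t(M(K_4))\le5$.

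For the lower bound when $n\ge5$ I would rule out $\ell:=\chi_d^t(M(W_n))=n$ and $\ell=n+1$. If $\ell=n$, then $T$ meets every colour class, so no class equals $\{m_{(i-1)i}\}$ or $\{m_{i(i+1)}\}$; by Lemma~\ref{3item}(1) each $v_i$ is then totally dominated only by $\{m_{0i}\}$, forcing $V_i=\{m_{0i}\}$ for all $i$, so the class of $v_0$ would contain every rim vertex and every rim-subdivision, hence the adjacent pair $\{v_1,m_{12}\}$ -- impossible. If $\ell=n+1$, name the classes so $v_0\in V_0$, $m_{0i}\in V_i$ ($1\le i\le n-1$) and $V_n$ is the unique class disjoint from $T$; note $V_n$ is a nonempty independent set of the embedded $M(C_{n-1})$. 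By Lemma~\ref{3item}(1), for each $i$ either $V_i=\{m_{0i}\}$ or $V_n$ is a rim-subdivision singleton $\{m_e\}$ with $v_i$ an endpoint of $e$; since $V_n$ is a single class, the set $B$ of indices of the latter type has $|B|\le2$ and $V_i=\{m_{0i}\}$ for every $i\notin B$. Hence the $2n-2$ vertices of the embedded $M(C_{n-1})$ lie only in $V_0$, $V_n$ and the (at most two) classes $V_i$ with $i\in B$. When $|B|\le1$ only two of these classes can carry all but at most one vertex of $M(C_{n-1})$, and since the remaining vertices still induce a subgraph containing a triangle for $n\ge5$ (hence non-bipartite), they cannot be properly $2$-coloured -- contradiction. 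When $|B|=2$, say $B=\{a,a+1\}$ and $V_n=\{m_e\}$ with $e=v_av_{a+1}$, inspecting $N_{M(W_n)}(m_e)=\{v_a,v_{a+1},m_{0a},m_{0(a+1)},m_{(a-1)a},m_{(a+1)(a+2)}\}$ shows that every vertex of it other than $m_{0a},m_{0(a+1)}$ is forced to share its class with one of $v_0,m_{0a},m_{0(a+1)}$, so the only colour classes contained in $N_{M(W_n)}(m_e)$ are $m_{0a}$-containing subsets of $\{m_{0a},v_{a+1},m_{(a+1)(a+2)}\}$ (equal to $V_a$) or the symmetric possibility for $V_{a+1}$; as $m_e$ must itself be totally dominated, one of $V_a,V_{a+1}$ is such a class of size $\le2$ carrying at most one vertex of $M(C_{n-1})$, so again all but at most two vertices of $M(C_{n-1})$ lie in only two classes while still inducing a triangle for $n\ge5$, and the non-bipartiteness contradiction recurs. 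Thus $\ell\ge n+2$, giving $\chi_d^t(M(W_n))=n+2$.

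For $n=4$ the exclusion of $\ell=4$ instead uses Lemmas~\ref{a(M(G))=n},~\ref{lemma:A_i},~\ref{4item} and Theorem~\ref{2n/3 =<  gamma =< n-1}: $M(K_4)$ has $10$ vertices and $\alpha(M(K_4))=4$, and $\chi_d^t(M(K_4))\ge\chi(M(K_4))\ge4$; if $\ell=4$, then $\sum_i|A_i|=4$, $\sum_i i\,|A_i|=10$, $|V_i|\le4$ and $|A_1|\ge2$ force $|A_1|=|A_4|=2$ and $|A_2|=|A_3|=0$, so by Lemma~\ref{4item} only the two singleton classes $\{x\},\{y\}$ totally dominate anything, forcing $N_{M(K_4)}(x)\cup N_{M(K_4)}(y)=V(M(K_4))$, i.e.\ $\gamma_t(M(K_4))\le2$, contradicting $\gamma_t(M(K_4))\ge\lceil 8/3\rceil=3$. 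Hence $\chi_d^t(M(K_4))=5$. The main obstacle is the $\ell=n+1$ case for $n\ge5$: the bookkeeping that pins down, in the presence of the forced singletons $\{m_{0i}\}$, exactly which vertices of the embedded $M(C_{n-1})$ can go into which of the few remaining classes, and in particular the argument that the lone rim-subdivision singleton $V_n=\{m_e\}$ cannot be totally dominated without collapsing the usable colour classes down to two, which an odd cycle inside $M(C_{n-1})$ forbids.
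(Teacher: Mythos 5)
Your proof is correct, but it follows a genuinely different route from the paper's. For the upper bound with $n\ge 5$ the paper writes down two explicit colorings, one for $n$ even and one for $n$ odd, whereas you give a single uniform construction: the $n-1$ spoke singletons $\{m_{0i}\}$ plus a proper $3$-coloring of the embedded $M(C_{n-1})$ with the hub absorbed into one of those three classes; this is cleaner and avoids the parity split. For the lower bound the paper starts from the clique on $\{v_0,m_1,\dots,m_{n-1}\}$ and then runs a case analysis on how many spoke subdivisions lie in color classes of size at least $2$ (none, exactly one, at least two, with sub-cases on their relative positions); you instead rule out $\ell=n$ and $\ell=n+1$ directly by observing that total domination of each rim vertex forces almost every spoke class to be the singleton $\{m_{0i}\}$, so that the $2n-2$ vertices of the embedded $M(C_{n-1})$ must be squeezed into too few classes to properly color a surviving triangle $\{m_{(i-1)i},v_i,m_{i(i+1)}\}$ --- a coloring-obstruction argument rather than a class-counting one. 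The delicate step is your $\ell=n+1$, $|B|=2$ sub-case, where the total domination of the lone rim-subdivision singleton $V_n=\{m_e\}$ forces one of $V_a,V_{a+1}$ to have size at most $2$ inside $N_{M(W_n)}(m_e)$; I checked that the triangle-survival claim after deleting the at most two exceptional vertices of $M(C_{n-1})$ does hold for all $n\ge5$ (including $n=5$, where the rim is $C_4$). For $n=4$ your counting argument via $\alpha(M(K_4))=4$, Lemma~\ref{lemma:A_i}, Lemma~\ref{4item} and $\gamma_t(M(K_4))\ge 3$ is actually tighter than the paper's assertion that an $\ell=4$ coloring is forced ``up to reordering''; the trade-off is that your argument leans on several auxiliary lemmas, while the paper's is self-contained but more ad hoc.
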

\begin{proof}
To fix the notation, assume $V(W_n)=V=\{v_0,v_1,\dots, v_{n-1}\}$ and $E(W_n)=\{v_0v_1,v_0v_2,\dots, v_0v_{n-1}\}\cup\{v_1v_2, v_2v_3,\dots,v_{n-1}v_1\}$. Then we have $V(M(W_n))=V(W_n)\cup \mathcal{M}$, where $\mathcal{M}=\{ m_i~|~1\leq i \leq n-1 \}\cup\{ m_{i(i+1)}~|~1\leq i \leq n-2 \}\cup\{m_{1(n-1)}\}$. 

Assume $n=4$. Consider $V_1=V$, $V_2=\{m_1,m_{23}\}$, $V_3=\{m_2\}$, $V_4=\{m_3,m_{12}\}$ and $V_5=\{m_{13}\}$. By construction, $f=(V_1,\dots,V_{5})$ is a TDC of $M(W_n)$, and hence $\chi_d^t(M(W_4))\le 5$. On the other hand, let $f=(V_1,\dots,V_{\ell})$ be a minimal TDC of $M(W_4)$. 
Since $K_{4}\cong M(W_4)[v_0,m_1,\dots, m_{3}] \subseteq M(W_4)$, then $\ell\geq 4$. If $\ell=4$, this implies that, up to reordering the color classes, $V_1=V$, $V_2=\{m_1,m_{23}\}$, $V_3=\{m_2, m_{13}\}$ and $V_4=\{m_3,m_{12}\}$, but in this way $v_0\nsucc_t V_i$ for $1\leq i \leq 4$ and so $f$  is not a TDC of $M(W_4)$ which is a contradiction. This implies that $\chi_d^t(M(W_4))= 5$.

Assume that $n\ge 5$ is even. Consider $V_1=V$, $V_2=\{m_1,m_{23},m_{45},\dots,$ $m_{(n-2)(n-1)}\}$, $V_3=\{m_{34},m_{56},\dots, m_{(n-1)n}\}$, $V_4=\{m_{12}\}$ and, for any $i=5,\dots, n+2$, $V_i=\{m_{i-3}\}$. By construction, $f=(V_1,\dots,V_{n+2})$ is a TDC of $M(W_n)$, and hence $\chi_d^t(M(W_n))\le n+2$.
 On the other hand, let $f=(V_1,\dots,V_{\ell})$ be a minimal TDC of $M(W_n)$. Since $K_{n}\cong M(W_n)[v_0,m_1,\dots, m_{n-1}] \subseteq M(W_n)$, then $\ell\geq n$ and $v_0,m_1,\dots, m_{n-1}$ all belong to different color classes. If each $m_i$ belongs to a color class of cardinality $1$, then $\ell\ge n+3$, contradicting the first part of the proof for this case. This implies that there exists at least one $m_i$ belonging to a color class with at least $2$ elements. 
 If there is only one such class, then we can assume, without loss of generality, that $m_1\in V_1$ and $|V_1|\ge 2$. Since $f$ is a TDC, then at least one between $m_{12}$ and $m_{1(n-1)}$ has to belong to a color class of cardinality $1$. Moreover, $m_{12}, m_{1(n-1)}\notin V_1$ and hence $\ell\ge n+2$, proving our thesis. 
 Assume that there exist $1\le i\le n-1$ such that $m_i, m_j$ belong to a color class with at least $2$ elements. If $j-i=1$ or $j-i=n-2$, then we can assume, without loss of generality, that $i=1$, $j=2$, $m_1\in V_1$ with $|V_1|\ge 2$ and $m_2\in V_2$ with $|V_2|\ge 2$. This implies that two vertices between $v_1, v_2, m_{1(n-1)}, m_{12}$ and $m_{23}$ have to belong to a color class of cardinality $1$ and hence $\ell\ge n+2$, proving our thesis. 
 If $2\ge j-i\ge n-3$, then at least one between $m_{(i-1)i}$ and $m_{i(i+1)}$ has to belong to a color class of cardinality $1$, and similarly at least one between $m_{(j-1)j}$ and $m_{j(j+1)}$ has to belong to a color class of cardinality $1$. This implies that $\ell\ge n+2$, proving our thesis. This shows that $\chi_d^t(M(W_n))= n+2$.

Assume $n$ is odd. Consider $V_1=V$, $V_2=\{m_{12},m_{34},\dots, m_{(n-2)(n-1)}\}$, $V_3=\{m_{23},m_{45},\dots, m_{(n-1)n}\}$ and, for any $i=4,\dots, n+2$, $V_i=\{m_{i-3}\}$. By construction, $f=(V_1,\dots,V_{n+2})$ is a TDC of $M(W_n)$, and hence $\chi_d^t(M(W_n))\le n+2$. 
On the other hand, let $f=(V_1,\dots,V_{\ell})$ be a minimal TDC of $M(W_n)$. 
Since $K_{n}\cong M(W_n)[v_0,m_1,\dots, m_{n-1}] \subseteq M(W_n)$, then $\ell\geq n$ and $v_0,m_1,\dots, m_{n-1}$ all belong to different color classes. If each $m_i$ belongs to a color class of cardinality $1$, then $\ell\ge n+2$, proving our thesis. Assume that there exists at least one $m_i$ belonging to a color class of cardinality bigger than $2$.
 If there is only one such class, then we can assume, without loss of generality, that $m_1\in V_1$ and $|V_1|\ge 2$. Since $f$ is a TDC, then at least one between $m_{12}$ and $m_{1(n-1)}$ has to belong to a color class of cardinality $1$. Moreover, $m_{12}, m_{1(n-1)}\notin V_1$ and hence $\ell\ge n+2$, proving our thesis. Assume that there exist $1\le i\le n-1$ such that $m_i, m_j$ belong to a color class of cardinality bigger than $2$. If $j-i=1$ or $j-1=n-2$, then we can assume, without loss of generality, that $i=1$, $j=2$, $m_1\in V_1$ with $|V_1|\ge 2$ and $m_2\in V_2$ with $|V_2|\ge 2$. 
 This implies that two vertices between $v_1, v_2, m_{1(n-1)}, m_{12}$ and $m_{23}$ have to belong to a color class of cardinality $1$ and hence $\ell\ge n+2$, proving our thesis. If $2\ge j-i\ge n-3$, then at least one between $m_{(i-1)i}$ and $m_{i(i+1)}$ has to belong to a color class of cardinality $1$, and similarly at least one between $m_{(j-1)j}$ and $m_{j(j+1)}$ has to belong to a color class of cardinality $1$. This implies that $\ell\ge n+2$, proving our thesis. This shows that $\chi_d^t(M(W_n))= n+2$.
\end{proof}

\begin{Theorem}[\cite{Clark}]\label{chi'  (K_n)=n,n-1}
	For any complete graph  $K_{n}$ of order at least 2,
	\begin{equation*}
	\chi'  (K_n)=\left\{
	\begin{array}{ll}
	n-1 & \mbox{if $n$ is even}\\
	n  & \mbox{if $n$ is odd}.
	\end{array}
	\right.
	\end{equation*}
\end{Theorem}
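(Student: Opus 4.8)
The plan is to combine an edge-counting lower bound with an explicit proper edge coloring obtained from a $1$-factorization. First I would record the easy lower bound $\chi'(K_n)\ge \Delta(K_n)=n-1$: the $n-1$ edges incident to any fixed vertex must receive pairwise distinct colors. When $n$ is odd I would improve this by noting that each color class of a proper edge coloring is a matching, and a matching in $K_n$ has at most $\lfloor n/2\rfloor=(n-1)/2$ edges; since $|E(K_n)|=\binom n2=\tfrac{n(n-1)}2$, every proper edge coloring uses at least $\tfrac{n(n-1)/2}{(n-1)/2}=n$ colors, so $\chi'(K_n)\ge n$ in this case.

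For the matching upper bound I would first treat $n=2k$ even and exhibit a $1$-factorization of $K_{2k}$ into $n-1=2k-1$ perfect matchings via the classical ``circle'' construction: identify the vertex set with $\mathbb{Z}_{2k-1}\cup\{\infty\}$ and, for each $c\in\mathbb{Z}_{2k-1}$, let $F_c$ consist of the edge $\{\infty,c\}$ together with the edges $\{c+i,\,c-i\}$ for $i=1,\dots,k-1$ (all indices reduced modulo $2k-1$). Assigning color $c$ to every edge of $F_c$ then gives a proper edge coloring of $K_{2k}$ with $n-1$ colors, whence $\chi'(K_n)\le n-1$; together with the degree bound this gives $\chi'(K_n)=n-1$ for $n$ even.

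For $n$ odd I would obtain the remaining bound $\chi'(K_n)\le n$ from the even case: embed $K_n$ in $K_{n+1}$ with $n+1$ even, apply the construction above to get a $1$-factorization of $K_{n+1}$ into $n$ perfect matchings, and delete one vertex $v$; since each perfect matching of $K_{n+1}$ contains exactly one edge at $v$, removing these edges leaves $n$ matchings of $K_n$ that still partition $E(K_n)$, giving a proper edge coloring with $n$ colors. Combined with the counting lower bound, $\chi'(K_n)=n$ for $n$ odd.

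The main obstacle is verifying that the circle construction is genuinely a $1$-factorization. One must check that each $F_c$ is a perfect matching --- the elements $c\pm i$, $i=1,\dots,k-1$, are exactly the $2k-2$ nonzero residues modulo $2k-1$, each occurring once, so together with $c$ and $\infty$ every vertex is covered exactly once --- and that the $F_c$ are pairwise edge-disjoint and cover all $\binom{2k}2$ edges. The latter follows because $\{\infty,x\}$ lies only in $F_x$, while for $x\ne y$ in $\mathbb{Z}_{2k-1}$ the relation $\{c+i,c-i\}=\{x,y\}$ forces $c=\tfrac{x+y}{2}$ (division by $2$ being legitimate since $\gcd(2,2k-1)=1$) and $i=\pm\tfrac{x-y}{2}$, so $\{x,y\}$ belongs to exactly one $F_c$; an edge count then confirms everything is accounted for. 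This bookkeeping is routine but requires care with the modular arithmetic.
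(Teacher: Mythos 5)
The paper gives no proof of this statement: it is a classical result (Vizing class of complete graphs) quoted directly from the Clark--Holton reference, so there is no in-paper argument to compare against. Your proposal is a correct and complete proof by the standard route: the degree bound $\chi'(K_n)\ge n-1$, the matching-size count giving $\chi'(K_n)\ge n$ for odd $n$, the circle (round-robin) $1$-factorization of $K_{2k}$ for the even upper bound, and vertex deletion to transfer it to odd $n$. The only nitpick is a wording slip in your verification that $F_c$ is a perfect matching: the elements $c\pm i$, $i=1,\dots,k-1$, are the $2k-2$ residues \emph{other than} $c$ (i.e.\ $c$ shifted by the nonzero residues), not the nonzero residues themselves; the conclusion you draw from it is nevertheless correct, as is the uniqueness argument via $c=\tfrac{x+y}{2}$ in $\mathbb{Z}_{2k-1}$.
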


\begin{Theorem}\label{chi(M(K_{n}))=n}  For any complete graph $K_{n}$ on $n$ vertices, with $n\geq 2$, 
	\[
	\chi(M(K_{n}))=n.
	\]
\end{Theorem}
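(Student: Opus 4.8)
The plan is to sandwich $\chi(M(K_n))$ between $n$ and $n$. Throughout I write $V(K_n)=\{v_1,\dots,v_n\}$ and, following the standard notation fixed in the paper, $m_{ij}$ for the vertex of $M(K_n)$ coming from the edge $v_iv_j$.

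For the lower bound I would exhibit a complete subgraph of $M(K_n)$ on $n$ vertices, namely $\{v_1\}\cup\{m_{1j}\mid 2\le j\le n\}$: the vertex $v_1$ is incident in $K_n$ to each edge $v_1v_j$, so $v_1$ is adjacent in $M(K_n)$ to every $m_{1j}$, and any two edges $v_1v_j,v_1v_k$ with $j\ne k$ are adjacent in $K_n$, so $m_{1j}$ and $m_{1k}$ are adjacent in $M(K_n)$. Hence these $n$ vertices are pairwise adjacent and $\chi(M(K_n))\ge n$. (For odd $n$ one could instead invoke $\chi(M(K_n))\ge\chi(L(K_n))=\chi'(K_n)=n$, but the clique argument is uniform in $n$ and also disposes of the small case $n=2$.)

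For the upper bound I would split on the parity of $n$, in both cases starting from a proper edge coloring of $K_n$, which by Theorem~\ref{chi'  (K_n)=n,n-1} uses $n-1$ colors when $n$ is even and $n$ colors when $n$ is odd. If $n$ is even, fix a proper edge coloring $c\colon E(K_n)\to\{1,\dots,n-1\}$ and extend it to $M(K_n)$ by giving the single new color $n$ to every $v_i$. This is a proper coloring of $M(K_n)$: the vertices of $V(K_n)$ are pairwise non-adjacent in $M(K_n)$, and the neighbours of $m_{ij}$ in $M(K_n)$ are exactly $v_i,v_j$ (colored $n\neq c(m_{ij})$) together with the edges of $K_n$ sharing an endpoint with $v_iv_j$ (colored differently by $c$). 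Thus $\chi(M(K_n))\le n$ for $n$ even.

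The odd case is where the real work lies, and I expect the bijection below to be the main obstacle. Here $\chi'(K_n)=n$, so assigning a new color to the vertices would push us to $n+1$; instead the vertex colors must be recycled from the edge colors. Fix a proper edge coloring $c\colon E(K_n)\to\{1,\dots,n\}$. Since $|E(K_n)|=\binom{n}{2}$ and each of the $n$ color classes is a matching with at most $(n-1)/2$ edges (as $n$ is odd), every color class has exactly $(n-1)/2$ edges, hence covers all but exactly one vertex of $K_n$; let $\mu(k)$ be the vertex missed by color class $k$. Examining a single vertex $v$, its $n-1$ incident edges receive $n-1$ distinct colors under $c$, so $v$ is missed by exactly one color class; this shows $\mu\colon\{1,\dots,n\}\to V(K_n)$ is injective, hence bijective. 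Now color $M(K_n)$ by $f(m_{ij})=c(m_{ij})$ and $f(v)=\mu^{-1}(v)$, using only the $n$ colors $1,\dots,n$. It is proper: adjacent edges keep distinct colors from $c$; $V(K_n)$ is independent in $M(K_n)$; and if $c(m_{ij})=k$ then the matching of color $k$ contains $m_{ij}$ and therefore covers both $v_i$ and $v_j$, so $\mu(k)\notin\{v_i,v_j\}$, i.e. $f(v_i),f(v_j)\neq k=f(m_{ij})$. Hence $\chi(M(K_n))\le n$ in this case too, and combining with the lower bound gives $\chi(M(K_n))=n$. The only delicate point throughout is the bijectivity of the missing-vertex map $\mu$, which is where I would take most care.
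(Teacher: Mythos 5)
Your proposal is correct and follows essentially the same route as the paper: the lower bound via the clique on $v_1,m_{12},\dots,m_{1n}$, the even case by adding one fresh color for all of $V(K_n)$ on top of an $(n-1)$-edge-coloring, and the odd case by assigning each vertex the unique color missing from its incident edges. Your explicit verification that the missing-color map is a bijection is slightly more detail than the paper records, but it is the same idea.
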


\begin{proof}
To fix the notation, assume $V(K_n)=V=\{v_1,\dots, v_{n}\}$ and $E(K_n)=\{v_iv_j| 1 \leq i < j \leq n\}$. Then we have $V(M(K_n))=V(K_n)\cup \mathcal{M}$, where $\mathcal{M}=\{ m_{ij}~|~1\leq i<j \leq n \}$. 

Assume $n$ is even. By Theorem~\ref{chi'  (K_n)=n,n-1}, we can consider $f_1=(V_1,\dots,V_{n-1})$ a proper coloring of $L(K_n)$. Since to color the induced subgraph $M(K_n)[v_1,m_{12},\dots, m_{1n}] \subseteq M(K_n)$ we need $n$ colors, then $\chi(M(K_{n})) \geq n$. On the other hand, $g_1=(V_1,\dots,V_{n-1},V)$  is a proper coloring of $M(K_n)$, and hence $\chi(M(K_{n}))=n$.

Assume $n$ is odd. By Theorem~\ref{chi'  (K_n)=n,n-1}, we can consider $f_2=(V_1,\dots,V_{n})$ a proper coloring of $L(K_n)$. Since to color the induced subgraph $M(K_n)[v_1,m_{12},\dots, m_{1n}] \subseteq M(K_n)$ we need $n$ colors, then $\chi(M(K_{n})) \geq n$. On the other hand, for each $1\le i\le n$, $d_{K_n}(v_i)=n-1$. This implies that for each $1\le i\le n$ there exists $1\le j\le n$ such that $m_{ik}\notin V_j$ for all $1\le k\le n$. Let $W_j=V_j\cup\{v_i~|~m_{ik}\notin V_j \text{ for all }  1\le k\le n\}$. By construction $g_2=(W_1,\dots,W_n)$ is a proper coloring of $M(K_n)$, and hence $\chi(M(K_{n}))=n$.
\end{proof}
%

\begin{Theorem}\label{new bound for Kn}
For any complete graph $K_n$ of order $n \ge 2$, 
$$n+1\le\chi_d^t(M(K_n))\le n+\lceil \frac{2n}{3}\rceil-1.$$
Moreover, the bounds are tight.
\end{Theorem}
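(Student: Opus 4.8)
Our plan is to prove the two inequalities separately. Throughout we write $V(M(K_n))=V\cup\mathcal M$ with $V=\{v_1,\dots,v_n\}$ and $\mathcal M=\{m_{ij}\mid 1\le i<j\le n\}$, we recall that $M(K_n)[\mathcal M]=L(K_n)$, and we assume $n\ge3$ (when $n=2$, $M(K_2)\cong P_3$ is complete bipartite).

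\emph{Upper bound.} The idea is to apply Theorem~\ref{chi_d ^t=<gamma+min chi} with a cleverly chosen minimum total dominating set. First I would build $S\subseteq\mathcal M$ as follows: partition $\{1,\dots,n\}$ into $\lfloor n/3\rfloor-1$ blocks of size $3$ together with one last block of size $3$, $4$ or $5$ (according to $n\bmod 3$), and for a block $\{a_1,\dots,a_t\}$ put the path edges $m_{a_1a_2},m_{a_2a_3},\dots,m_{a_{t-1}a_t}$ into $S$. Then $|S|=n-\lfloor n/3\rfloor=\lceil\frac{2n}{3}\rceil$; moreover $S$ is an edge cover of $K_n$, and inside $M(K_n)$ every vertex has a neighbour in $S$ (consecutive path edges dominate each other, each $v_i$ lies on an $S$-edge, and every $m_{ij}\notin S$ shares an endpoint with an $S$-edge), so $S$ is a TDS of $M(K_n)$, and by Theorem~\ref{2n/3 =<  gamma =< n-1} it is a \emph{minimum} one, i.e.\ $\gamma_t(M(K_n))=\lceil\frac{2n}{3}\rceil$. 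Next I would bound $\chi\big(M(K_n)[(V\cup\mathcal M)\setminus S]\big)$. Since $S$ is an edge cover, $\Delta(K_n-S)\le n-2$, so Vizing's theorem gives $\chi'(K_n-S)\le n-1$, and as $M(K_n)[\mathcal M\setminus S]=L(K_n-S)$ this subgraph is properly $(n-1)$-colourable. Such a colouring extends to all $v_i$ without new colours: $v_i$ is $M(K_n)$-adjacent only to the at most $n-2$ edges of $K_n-S$ incident to $i$, so a colour among $n-1$ is free for $v_i$, and the $v_i$'s are pairwise non-adjacent. Hence $\chi\big(M(K_n)[(V\cup\mathcal M)\setminus S]\big)\le n-1$, and Theorem~\ref{chi_d ^t=<gamma+min chi} yields $\chi_d^t(M(K_n))\le\gamma_t(M(K_n))+(n-1)=n+\lceil\frac{2n}{3}\rceil-1$.

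\emph{Lower bound.} By Theorems~\ref{2=<chiM_d ^t=<n} and~\ref{chi(M(K_{n}))=n} we already have $\chi_d^t(M(K_n))\ge\chi(M(K_n))=n$, so it remains to exclude $\ell:=\chi_d^t(M(K_n))=n$. Suppose $f=(V_1,\dots,V_n)$ is a minimal TDC; its restriction $g$ to $\mathcal M$ is a proper edge colouring of $K_n$ with some $n'\le n$ colours. Each $v_i$ is totally dominated by a colour class contained in $N_{M(K_n)}(v_i)\subseteq\mathcal M$, which by Lemma~\ref{3item}(1) is forced to be a singleton $\{m_{ij}\}$ — an edge of $K_n$ through $i$; the resulting set of singleton edges covers all of $V(K_n)$, hence has at least $\lceil\frac n2\rceil$ elements, and each of the corresponding colours appears exactly once in $g$. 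Counting the $\binom n2$ edges of $K_n$ by colour class — at least $\lceil\frac n2\rceil$ colours carry a single edge, every remaining colour (a matching) carries at most $\lfloor\frac n2\rfloor$, and there are at most $n$ colours in all — gives
\[
\binom n2\ \le\ \Big\lceil\tfrac n2\Big\rceil+\Big(n-\Big\lceil\tfrac n2\Big\rceil\Big)\Big\lfloor\tfrac n2\Big\rfloor\ =\ \Big\lceil\tfrac n2\Big\rceil+\Big\lfloor\tfrac n2\Big\rfloor^{2},
\]
which fails for every $n\ge5$; hence $\ell\ge n+1$ there. For $n=3,4$ the inequality is tight and gives nothing, but $K_3=C_3$ and $K_4=W_4$, so $\chi_d^t(M(K_3))=4$ and $\chi_d^t(M(K_4))=5$ by Theorems~\ref{chi_d^t(M(C))} and~\ref{chi_d^t(M(W))}, equal to $n+1$ in both cases. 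Thus $\chi_d^t(M(K_n))\ge n+1$ for all $n\ge3$.

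\emph{Tightness and main obstacle.} Taking $n=3$, both bounds equal $4$ and $\chi_d^t(M(K_3))=\chi_d^t(M(C_3))=4$ by Theorem~\ref{chi_d^t(M(C))}, so neither inequality can be improved in general. The step I expect to be the main obstacle is not the general argument but the two smallest cases $n=3,4$ of the lower bound, where the edge-counting inequality degenerates to an equality and must be replaced by a separate analysis — conveniently bypassed here by invoking the already-established values of $\chi_d^t(M(C_3))$ and $\chi_d^t(M(W_4))$, but otherwise requiring an explicit case check. The remaining points that call for care are the verification that the constructed $S$ is genuinely a TDS of size exactly $\lceil\frac{2n}{3}\rceil$, and that the Vizing-plus-extension step uses no more than $n-1$ colours.
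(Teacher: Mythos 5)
Your proof is correct for $n\ge 3$ and follows the same overall strategy as the paper (upper bound via Theorem~\ref{chi_d ^t=<gamma+min chi} applied to a minimum total dominating set $S\subseteq\mathcal M$ consisting of paths on blocks of $3$, $4$ or $5$ indices — your $S$ coincides with the paper's $S_0,S_1,S_2$; lower bound by ruling out $\chi_d^t(M(K_n))=n$), but the two key steps are executed differently, and in both cases your version is the more solid one. For the upper bound, the paper deduces $\chi(M(K_n)-S_i)\le n-1$ from the claim $\Delta(M(K_n)-S_i)=n-2$; that claim is false for the edge-vertices (an $m_{ij}\notin S_i$ has degree $2n-2-|N(m_{ij})\cap S_i|$, which exceeds $n-2$ already for $n=3$), so your route — Vizing on $K_n-S$ to get an $(n-1)$-edge-colouring of $L(K_n-S)$, then a greedy extension to the independent set $V$ using that $S$ is an edge cover — is not just a stylistic variant but actually repairs a gap. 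For the lower bound, the paper asserts tersely that $\ell=n$ would force $|V_i|\ge2$ for all $i$, contradicting Lemma~\ref{lemma:A_i}; your edge-counting inequality $\binom n2\le\lceil\frac n2\rceil+\lfloor\frac n2\rfloor^2$ is the honest quantitative form of that assertion, and it correctly isolates $n=3,4$ as the degenerate cases where the count is tight and a separate argument is needed — which you supply by citing $\chi_d^t(M(C_3))=4$ and $\chi_d^t(M(W_4))=5$ from Theorems~\ref{chi_d^t(M(C))} and~\ref{chi_d^t(M(W))}. One point you should state explicitly rather than leave in a parenthesis: the theorem as written starts at $n=2$, but $M(K_2)\cong K_{1,2}$ is complete bipartite, so $\chi_d^t(M(K_2))=2<3=n+1$ by Theorem~\ref{2=<chi_d ^t=<n}; the lower bound (and hence the statement) genuinely requires $n\ge3$, and your restriction to $n\ge3$ is not merely a convenience.
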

\begin{proof}
To fix the notation, assume $V(K_n)=V=\{v_1,\dots, v_{n}\}$ and $E(K_n)=\{v_iv_j| 1 \leq i < j \leq n\}$. Then $V(M(K_n))=V \cup \mathcal{M}$ where $\mathcal{M}=\{ m_{ij}~|~1\leq i<j \leq n \}.$ By \cite{KPPT}, we know that $\gamma_t(M(K_n))= \lceil \frac{2n}{3}\rceil$ and also that the sets
\begin{equation*}
\begin{array}{ll}
S_0=\{m_{(3i+1)(3i+2)}, m_{(3i+2)(3i+3)}~|~ 0 \leq i \leq \lfloor n/3 \rfloor-1\}& \mbox{if }n \equiv 0 \pmod{3}, \\
S_1=S_0\cup\{m_{(n-1)n}\} & \mbox{if }n \equiv 1 \pmod{3}, \\
S_2=S_0\cup\{m_{(n-2)(n-1)},m_{(n-1)n}\} & \mbox{if }n \equiv 2 \pmod{3},
\end{array}
\end{equation*}
are minimal TDSs of $M(K_n)$. Since to color the induced subgraph $(M(K_n)-S_i)[v_1,m_{13},\dots, m_{1n}] \subseteq M(K_n)-S_i$ for each $0 \le i \le 2$, we need $n-1$ colors, then $\chi(M(K_{n})-S_i) \geq n-1$. 
We know that for every graph $G$, $\chi(G) \le \Delta(G)+1$. Now since $\Delta(M(K_{n})-S_i)=n-2$, we have $\chi(M(K_{n})-S_i) \le n-1$, and hence $\chi(M(K_{n})-S_i) = n-1$.
 By Theorem \ref{chi_d ^t=<gamma+min chi} we have $\chi_d^t(M(K_n))\le n+\lceil \frac{2n}{3}\rceil-1$. 

We claim that $n+1\le\chi_d^t(M(K_n))$. By absurd, assume $\chi_d^t(M(K_n))=n=\chi(M(K_n))$ and let $f=(V_1,\dots,V_{n})$ be a minimal TDC of $M(K_n)$. By Theorems \ref{chi'  (K_n)=n,n-1} and \ref{chi(M(K_{n}))=n}, this implies that $|V_i| \ge 2$ for each $1 \le i \le n$, which is a contradiction by Lemma \ref{lemma:A_i}. As a consequence, $n+1\le\chi_d^t(M(K_n))$. By Theorem \ref{chi_d^t(M(C))}, the bounds are tight when $n=3$.
\end{proof}
\begin{Definition} The \emph{friendship} graph $F_n$ of order $2n+1$ is obtained by joining $n$ copies of the cycle graph $C_3$ with a common vertex.
\end{Definition}

\begin{Theorem}\label{prop:mintotdominatorfriendship}
Let $F_n$ be the friendship graph with $n\ge2$. Then $$\chi_d^t(M(F_n))= 2n+2.$$
\end{Theorem}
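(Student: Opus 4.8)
The friendship graph $F_n$ has a central vertex $v_0$ joined to $2n$ outer vertices, paired up so that each pair $\{v_i,v_{n+i}\}$ together with $v_0$ spans a triangle. I would fix the notation $E(F_n)=\{v_0v_i, v_0v_{n+i}, v_iv_{n+i}~|~1\le i\le n\}$, so that $V(M(F_n))=V(F_n)\cup\mathcal{M}$ with $\mathcal{M}=\{m_i, m_{n+i}, m_{i(n+i)}~|~1\le i\le n\}$, where $m_i$ subdivides $v_0v_i$, $m_{n+i}$ subdivides $v_0v_{n+i}$, and $m_{i(n+i)}$ subdivides $v_iv_{n+i}$. The proof will have the usual two halves: an upper bound by exhibiting an explicit TDC with $2n+2$ classes, and a matching lower bound.

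For the upper bound I would take the $2n$ singleton classes $\{m_i\}$ and $\{m_{n+i}\}$ for $1\le i\le n$ (these totally dominate $v_i$ and $v_{n+i}$ respectively, and also $v_0$ via any one of them), one class $V(F_n)$ consisting of all original vertices (this totally dominates every $m$-vertex: each $m_i$ is adjacent to $v_0$ and $v_i$, each $m_{i(n+i)}$ to $v_i$ and $v_{n+i}$, and $V(F_n)$ is independent in $M(F_n)$ by Lemma~\ref{a(M(G))=n}), and finally one more class $\{m_{1(n+1)}, m_{2(n+2)},\dots, m_{n(n+n)}\}$ — these $n$ vertices are pairwise non-adjacent in $M(F_n)$ because the edges $v_iv_{n+i}$ of $F_n$ are pairwise non-adjacent, so this is a legitimate colour class; it totally dominates nothing new but it is needed to colour the vertices $m_{i(n+i)}$ properly, and each such vertex $m_{i(n+i)}$ is dominated by the singleton $\{m_i\}$ (since $v_i\in N(m_i)$? no — rather each $v_i$ is what gets dominated; I will instead check that every vertex of $M(F_n)$ is totally dominated: the $m$-vertices by $V(F_n)$, the $v$-vertices by singletons). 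This gives $\chi_d^t(M(F_n))\le 2n+2$.

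For the lower bound, let $f=(V_1,\dots,V_\ell)$ be a minimal TDC. First, the subgraph of $M(F_n)$ induced by $\{v_0, m_1,\dots,m_n, m_{n+1},\dots,m_{2n}\}$ is a clique on $2n+1$ vertices (all $m$-vertices incident to $v_0$ are pairwise adjacent in the line graph, and all are adjacent to $v_0$), so $\ell\ge 2n+1$ and these $2n+1$ vertices lie in distinct classes. Now I would argue, as in the double-star proof (Theorem~\ref{chi_d^t(M(S_{1,n,n}))=2n+1}) and the wheel proof (Theorem~\ref{chi_d^t(M(W))}), that one more class is forced. Assume for contradiction $\ell=2n+1$; then the classes are exactly $\{v_0\}$ or a class containing $v_0$, together with $2n$ classes each containing one of $m_i,m_{n+i}$. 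The vertex $v_i$ has $N_{M(F_n)}(v_i)=\{m_i, m_{i(n+i)}\}$, so by Lemma~\ref{3item}(1) $v_i$ is totally dominated only by a singleton class $\{m_i\}$ or $\{m_{i(n+i)}\}$; similarly for $v_{n+i}$. Tracking where the $m_{i(n+i)}$ can go and using Lemma~\ref{lemma:A_i} (so $|A_1|\ge \lceil\frac{2n+1}{2}\rceil=n+1$, forcing many singletons), together with the requirement that each $m_{i(n+i)}$ be properly coloured (it is adjacent to $m_i$, $m_{n+i}$, $v_i$, $v_{n+i}$) and be totally dominated, I expect a counting contradiction showing $\ell\ge 2n+2$.

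The main obstacle is the lower bound bookkeeping: one must show that the $2n+1$ clique colours cannot simultaneously handle proper colouring and total domination of the $2n$ outer vertices and the $n$ triangle-edge vertices. The cleanest route is probably to count: the $2n$ outer vertices need, by Lemma~\ref{3item}(1), $2n$ singleton dominators, but the $m_i$ and $m_{n+i}$ already sit in the clique classes and a singleton dominating $v_i$ must be $\{m_i\}$ or $\{m_{i(n+i)}\}$; if it is $\{m_i\}$ for all $2n$ outer vertices we use up all $2n$ non-central clique classes as singletons, leaving only the $v_0$-class to colour all $n$ vertices $m_{i(n+i)}$ — impossible once $n\ge2$ since then some $m_{i(n+i)}$ shares the class of $v_0$, contradiction as $m_{i(n+i)}\not\sim v_0$ is fine, but it would need a total dominator, and $V(F_n)\setminus\{$the outer vertices it is non-adjacent to$\}$ cannot be a single class — so at least one extra class appears. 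I would write this dichotomy carefully, branching on whether each $v_i$ is dominated by $\{m_i\}$ or by $\{m_{i(n+i)}\}$, and in each branch derive $\ell\ge 2n+2$, completing the proof.
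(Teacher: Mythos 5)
Your plan follows the paper's proof essentially verbatim: the same $(2n+2)$-class colouring for the upper bound and the same ``clique of size $2n+1$ plus one forced extra class'' argument for the lower bound. Two points need repair before it is a proof. In the upper bound you assert that the $m$-vertices are totally dominated by the class $V(F_n)$. That is false: every vertex of $\mathcal{M}$ is adjacent to exactly two original vertices, so $V(F_n)\not\subseteq N_{M(F_n)}(m)$ for any $m\in\mathcal{M}$ once $n\ge 2$. The construction still works, but for a different reason: each $m_i$ (and $m_{n+i}$) is adjacent to the singleton $\{m_j\}$ for any $j\ne i$ incident to $v_0$, and each $m_{i(n+i)}$ is adjacent to the singleton $\{m_i\}$. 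The class $V(F_n)$ exists only to colour the original vertices properly; it totally dominates nothing.

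In the lower bound you stop at ``I expect a counting contradiction,'' and the dichotomy you propose to branch on collapses immediately, so no branching is needed. Once $\ell=2n+1$ and the clique vertices $v_0,m_1,\dots,m_{2n}$ are placed in the $2n+1$ distinct classes, \emph{every} colour class contains a vertex of that clique; since $m_{i(n+i)}$ is not in the clique, $\{m_{i(n+i)}\}$ can never be a colour class. By Lemma~\ref{3item}(1) the class totally dominating $v_i$ must be a singleton contained in $N_{M(F_n)}(v_i)=\{m_i,m_{i(n+i)}\}$, so it must be $V_i=\{m_i\}$, and likewise $V_{n+i}=\{m_{n+i}\}$. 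Hence all $2n$ non-central clique classes are singletons, and the remaining $3n+1$ vertices $v_0,v_1,\dots,v_{2n},m_{1(n+1)},\dots,m_{n(2n)}$ would all have to lie in the single remaining class $V_{2n+1}$, which is impossible because $v_i$ and $m_{i(n+i)}$ are adjacent. This one-line observation is exactly how the paper closes the argument; without it your sketch is not yet a proof.
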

\begin{proof} To fix the notation, assume $V(F_n)=\{v_0,v_1,\dots, v_{2n}\}$ and $E(F_n)=\{v_0v_1,v_0v_2,\dots, v_0v_{2n}\}\cup\{v_1v_2, v_3v_4,\dots,v_{2n-1}v_{2n}\}$. Then $V(M(F_n))=V(F_n)\cup \mathcal{M}$, where $\mathcal{M}=\{ m_i~|~1\leq i \leq 2n \}\cup\{ m_{i(i+1)}~|~1\leq i \leq 2n-1 \text{ and } i \text{ is odd}\}$. 

Let $f=(V_1,\dots,V_{\ell})$ be a minimal TDC of $M(F_n)$. Since the induced subgraph $M(F_n)[v_0,m_{1},\dots, m_{2n}]$ is isomorphic to $K_{2n+1}$, we have $\ell \ge 2n+1$. Suppose $\ell = 2n+1$. Without loss of generality, we can assume that $m_i \in V_i$ for $1 \le i \le 2n$ and $v_0 \in V_{2n+1}$. This implies that $v_i \succ_t \{m_i\}$ for $1 \le i \le 2n$ and $\{m_i\}$ is an unique color class. Now since at least two color classes are needed to color the vertices $v_i$ and $m_{i(i+1)}$ for some $i$, we have $\ell > 2n+1$, which is a contradiction. So $\ell \ge 2n+2$. 
On the other hand, if we consider $V_i=\{m_i\}~~~\text{for}~~1 \le i \le 2n$, $V_{2n+1}=\{v_0 , v_1 , \dots, v_{2n}\}$, and 
$V_{2n+2}=\{ m_{i(i+1)}~|~1\leq i \leq 2n-1 \text{ and } i \text{ is odd}\},$
we have that $(V_1,\dots,V_{2n+2})$ is a TDC of $M(F_n)$, and hence $\chi_d^t(M(F_n))= 2n+2.$
\end{proof}

\section{Middle graph of trees}

In this section, we will describe bounds for the total dominator coloring number of the middle graph of trees.

\begin{Theorem}  \label{chi_d^t(M(T))< n}Let $T$ be a tree of order $n\geq 2$. Then
$$\chi_d^t(M(T))\leq n.$$
\end{Theorem}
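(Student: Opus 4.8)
The plan is to exhibit an explicit total dominator coloring of $M(T)$ using exactly $n$ colors, generalising the construction used for paths in Theorem~\ref{chi_d^t(M(P))}. Fix $V(T)=\{v_1,\dots,v_n\}$, so that $V(M(T))=V\cup M$ with $V=V(T)$ and $M=\{m_{ij}\mid v_iv_j\in E(T)\}$; since $T$ is a tree, $|M|=|E(T)|=n-1$. The coloring $f$ I would take has as its color classes the $n-1$ singletons $\{m_e\}$, one for each edge $e\in E(T)$, together with one further class equal to the whole of $V$, for a total of $n$ color classes.

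First I would check that $f$ is a proper coloring: the singleton classes never create a conflict, and $V$ is an independent set of $M(T)$ because, by the definition of the middle graph, two original vertices are never adjacent in $M(T)$. So the content of the proof is in verifying the total-dominator condition, namely that every vertex of $M(T)$ is adjacent to every vertex of some color class.

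For an original vertex $v_i$: since $\delta(T)\ge 1$, there is an edge $e$ of $T$ incident to $v_i$, whence $\{m_e\}\subseteq N_{M(T)}(v_i)$ and $v_i\succ_t\{m_e\}$. For an edge-vertex $m_e$ with $e=v_iv_j$ I would split into two cases. If $n=2$, then $T=K_2$ and $N_{M(T)}(m_e)=\{v_1,v_2\}=V$, so $m_e\succ_t V$. If $n\ge 3$, then since $T$ is connected at least one endpoint of $e$, say $v_i$, has degree at least $2$ in $T$; pick an edge $f\ne e$ incident to $v_i$. Then $e$ and $f$ are adjacent edges of $T$, so $\{m_f\}\subseteq N_{M(T)}(m_e)$ and $m_e\succ_t\{m_f\}$. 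Together these show $f$ is a TDC with $n$ classes, so $\chi_d^t(M(T))\le n$. Note it is irrelevant that the class $V$ may fail to totally dominate any vertex when $n\ge 3$, since a TDC only asks that each vertex be totally dominated by \emph{some} class.

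The argument has no genuinely hard step; the only point requiring care is the existence of the adjacent edge $f$ when $n\ge 3$, which is exactly where connectedness of $T$ enters---an edge both of whose endpoints have degree $1$ would be a whole $K_2$ component, which is impossible once $n\ge 3$. Alternatively one could dispose of the case $n=2$ by observing $M(K_2)=K_{1,2}$ and invoking Theorem~\ref{2=<chi_d ^t=<n}.
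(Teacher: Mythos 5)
Your construction is exactly the one in the paper: a singleton color class for each of the $n-1$ edge-vertices plus one class consisting of all of $V(T)$, giving $n$ classes. The paper states without detail that this is a TDC, whereas you verify the total-dominator condition explicitly (including the degenerate case $n=2$); the approach is the same and your argument is correct.
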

\begin{proof}
Assume $V(T)=\{v_1,\dots,v_n\}$. Then $V(M(T))=V(T)\cup M$ where
$M=\{m_{ij}~|~v_iv_j\in E(T)\}$. We give $|M|=n-1$ colors to each element of $M$ and a different color to each element in $V(T)$. This coloring is a TDC of $M(T)$ with $n$ color classes. Hence $\chi_d^t(M(T))\leq n$.

\end{proof}

 If we consider $T$ a tree and we denote the set of leaves of $T$ by $\leaf(T)=\{v\in V(T)~|~d_T(v)=1\}$, then we have the following result.
\begin{Theorem}\label{prop:mintotdominatortreeleaf}
Let $T$ be a tree with $n\ge3$ vertices. Then $$\chi_d^t(M(T))\ge |\leaf(T)|+1.$$
\end{Theorem}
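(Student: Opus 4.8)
The plan is to show that every leaf of $T$ forces a singleton color class that is ``used up'' by exactly one leaf, and then add one more color for the vertices of $V(T)$. Write $\leaf(T)=\{w_1,\dots,w_k\}$ with $k=|\leaf(T)|$, and let $w_i$ be incident in $T$ to the unique edge $e_i$, so that in $M(T)$ the vertex $w_i$ has the single neighbour $m_{e_i}$, i.e. $N_{M(T)}(w_i)=\{m_{e_i}\}$. Let $f=(V_1,\dots,V_\ell)$ be any TDC of $M(T)$. Since $f$ is a TDC, each $w_i$ must be a common neighbour of some color class $V_{c_i}$; but $V_{c_i}\subseteq N_{M(T)}(w_i)=\{m_{e_i}\}$ forces $V_{c_i}=\{m_{e_i}\}$, a singleton consisting of that one subdivision vertex. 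This is exactly the mechanism already used for the pendant vertices in the proofs of Theorems~\ref{theo:TDCstargraph} and \ref{chi_d^t(M(S_{1,n,n}))=2n+1}; here I would phrase it once, uniformly, via Lemma~\ref{3item}(1).

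Next I would argue the singleton classes $\{m_{e_1}\},\dots,\{m_{e_k}\}$ are pairwise distinct, hence give $k$ distinct colors. If $e_i\ne e_j$ then $m_{e_i}\ne m_{e_j}$, so the only way two of them coincide is if $e_i=e_j$, which would mean $w_i$ and $w_j$ are the two endpoints of the same edge; but then $T$ would be a single edge $K_2$, contradicting $n\ge 3$. (Note this is where the hypothesis $n\ge3$ is needed.) So far this produces $k$ color classes, all of the form $\{m_{e_i}\}$ with $m_{e_i}\in M$.

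Finally I would produce one more color class disjoint from these. Consider the set $V=V(T)=\{v_1,\dots,v_n\}$: it is independent in $M(T)$ (Lemma~\ref{a(M(G))=n}), it is nonempty, and it is disjoint from $M\supseteq\{m_{e_1},\dots,m_{e_k}\}$, so none of the color classes among $v_1,\dots,v_n$ can equal any $\{m_{e_i}\}$. Since each $v_i$ must itself lie in some color class of $f$, and $T$ has at least one internal vertex (again using $n\ge3$, in fact $n\ge 2$ suffices here since a non-leaf always exists when $T$ has an edge and... more carefully: when $n\ge3$ there is a vertex that is not a leaf, e.g. the center of a longest path), at least one color class of $f$ contains a vertex of $V$ and is therefore different from all of $\{m_{e_1}\},\dots,\{m_{e_k}\}$. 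Hence $\ell\ge k+1=|\leaf(T)|+1$.

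I do not expect a serious obstacle; the only point requiring care is the distinctness argument and making sure the ``extra'' color class genuinely avoids the $k$ leaf-classes, both of which hinge on the elementary observation that leaf-classes sit inside $M$ while at least one further class meets $V$. One should double-check the degenerate small cases ($T=P_3$, $T=K_{1,n}$) against the formula: for $K_{1,n}$ it gives $\chi_d^t(M(K_{1,n}))\ge n+1$, matching Theorem~\ref{theo:TDCstargraph}, which is a reassuring consistency check.
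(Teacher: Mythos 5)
Your proof is correct and follows essentially the same route as the paper's: each leaf forces a singleton color class $\{m_{e_i}\}$ contained in $M$, and one further class is needed because the vertices of $V(T)$ cannot lie in any of those classes. Your version is in fact slightly more careful than the paper's, since you explicitly verify that the $k$ leaf-classes are pairwise distinct (using $n\ge 3$ to rule out an edge with two leaf endpoints) and that the extra class genuinely differs from them; the paper leaves both points implicit.
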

\begin{proof} To fix the notation, assume $\leaf(T)=\{v_1,\dots,v_k\}$, for some $k\le n$ and $f=(V_1,\dots,V_{\ell})$ be a minimal TDC of $M(T)$. Since for each $i=1,\dots, k$, $N_{M(T)}(v_i)=\{m_{ij}\}$ for some $j$, we have that $\{m_{ij}\}$ is an unique color class so that $v_i \succ_t \{m_{ij}\}$. This implies that $\ell \ge k$. On the other hand, since at least one color class is needed to color the vertices in $V(M(T))-\{N_{M(T)}(v_i)| 1 \le i \le k\}$, we have $\chi_d^t(M(T))\ge k+1= |\leaf(T)|+1$.
\end{proof}

\begin{Remark}
Notice that the inequality of Theorem \ref{prop:mintotdominatortreeleaf} is sharp by Theorem \ref{theo:TDCstargraph}.
\end{Remark}

\begin{Theorem} \label{chi_d^t(M(T))=n} For any non-empty tree $T$ of order $n\geq 2$ with $\diam(T)\leq 3$, 
$$\chi_d^t(M(T))=n.$$
\end{Theorem}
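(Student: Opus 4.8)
The plan is to prove the two inequalities $\chi_d^t(M(T)) \le n$ and $\chi_d^t(M(T)) \ge n$ separately; the upper bound is already Theorem~\ref{chi_d^t(M(T))< n}, so the entire content is the lower bound for trees of diameter at most $3$. First I would dispose of the case $\diam(T) \le 2$: such a tree is a star $K_{1,n-1}$ (or a single edge $K_2$ when $n = 2$). For $n \ge 4$ this is exactly Theorem~\ref{theo:TDCstargraph}, which gives $\chi_d^t(M(K_{1,n-1})) = n$; the tiny cases $n = 2, 3$ (i.e. $P_2$ and $P_3$) can be checked by hand, using Theorem~\ref{2=<chi_d ^t=<n} or the fact that $M(P_3) \supseteq K_3$.

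The main case is $\diam(T) = 3$, i.e. $T$ is a \emph{double broom}: there are two adjacent central vertices $u, w$, with $u$ attached to $a \ge 1$ leaves $x_1,\dots,x_a$ and $w$ attached to $b \ge 1$ leaves $y_1,\dots,y_b$, so $n = a + b + 2$ and $|\leaf(T)| = a + b = n - 2$. By Theorem~\ref{prop:mintotdominatortreeleaf} we already get $\chi_d^t(M(T)) \ge |\leaf(T)| + 1 = n - 1$, so I only need to rule out the value $n - 1$. Suppose $f = (V_1,\dots,V_{n-1})$ is a TDC of $M(T)$ with $\ell = n-1$ classes. Label the subdivision vertices $m_i^x = $ the vertex of $M(T)$ on edge $ux_i$, $m_j^y$ on edge $wy_j$, and $m_0$ on the central edge $uw$. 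As in the proof of Theorem~\ref{prop:mintotdominatortreeleaf}, each leaf $x_i$ forces $\{m_i^x\}$ to be a singleton class totally dominating $x_i$, and similarly each $\{m_j^y\}$ is a singleton class; that already accounts for $a + b = n - 2$ classes, leaving exactly one further class $V^\ast$, which must therefore contain all of $V(M(T))$ minus those $n-2$ subdivision vertices, namely $V^\ast = \{u, w, m_0\} \cup (\text{remaining vertices})$. But $u, w, m_0$ pairwise induce a triangle in $M(T)$ (they form $M$ of the central edge together with its endpoints), so they cannot share a color class in a \emph{proper} coloring --- contradiction. Hence $\ell \ge n$, and with the upper bound $\chi_d^t(M(T)) = n$.

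I would write the argument so that it simultaneously covers the degenerate sub-cases $a = 0$ or $b = 0$ (which collapse to the star, already handled) and the possibility that $n - 2 = 0$, automatically excluded since $\diam(T) = 3$ forces $n \ge 4$. The one point that needs a little care is making the counting in the $\ell = n-1$ case airtight: I must argue that the $n-2$ singleton classes $\{m_i^x\}, \{m_j^y\}$ are genuinely distinct from each other (clear, since the sets are distinct) and distinct from the leftover class $V^\ast$ (clear, since $V^\ast$ must be nonempty and contain $u$, which is not any $m_i^x$ or $m_j^y$), so that exactly one class remains to cover $\{u,w,m_0\}$ and everything else.

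The step I expect to be the main obstacle is not any single deduction but rather the bookkeeping of the structure of $M(T)$ for the double broom: being careful that $N_{M(T)}(x_i) = \{m_i^x\}$ exactly (so Theorem~\ref{prop:mintotdominatortreeleaf}'s mechanism applies verbatim), and that $u$, $w$, $m_0$ really do form a triangle that blocks the single leftover class. Once that local picture is nailed down the contradiction is immediate, so the proof should be short.

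\begin{proof}
By Theorem~\ref{chi_d^t(M(T))< n}, it suffices to show $\chi_d^t(M(T)) \ge n$. If $\diam(T) \le 2$, then $T = K_{1,n-1}$ for $n \ge 3$ (and $T = P_2$ for $n = 2$); in these cases the equality $\chi_d^t(M(T)) = n$ follows from Theorem~\ref{theo:TDCstargraph} when $n \ge 4$, and is checked directly for $n = 2, 3$ using Theorem~\ref{2=<chi_d ^t=<n} (note $M(P_3) \supseteq K_3$).

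Assume now $\diam(T) = 3$, so $n \ge 4$. Then $T$ has two adjacent vertices $u, w$ of degree at least $2$, with every other vertex a leaf adjacent to $u$ or to $w$; say $u$ is adjacent to the leaves $x_1,\dots,x_a$ and $w$ to the leaves $y_1,\dots,y_b$, with $a, b \ge 1$ and $a + b + 2 = n$. In $M(T)$ denote by $m_i$ the subdivision vertex on the edge $ux_i$ ($1 \le i \le a$), by $m'_j$ the subdivision vertex on $wy_j$ ($1 \le j \le b$), and by $m_0$ the subdivision vertex on $uw$. Then $N_{M(T)}(x_i) = \{m_i\}$ and $N_{M(T)}(y_j) = \{m'_j\}$, while $u, w, m_0$ pairwise adjacent in $M(T)$ form a triangle.

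Let $f = (V_1,\dots,V_\ell)$ be a minimal TDC of $M(T)$, and suppose $\ell \le n - 1$. For each $i$, since $x_i$ must be totally dominated by some color class contained in $N_{M(T)}(x_i) = \{m_i\}$, that class equals $\{m_i\}$; similarly each $\{m'_j\}$ is a color class of $f$. These $a + b = n - 2$ singleton classes are pairwise distinct. Hence at most one further color class remains, and it must be nonempty and contain $u$ (as $u$ lies in none of the listed singletons), so there is exactly one further class $V^\ast$ and $V^\ast \supseteq \{u, w, m_0\}$. But $\{u, w, m_0\}$ induces a triangle in $M(T)$, contradicting that $f$ is a proper coloring. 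Therefore $\ell \ge n$, and combined with Theorem~\ref{chi_d^t(M(T))< n} we conclude $\chi_d^t(M(T)) = n$.
\end{proof}
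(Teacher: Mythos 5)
Your argument is essentially the paper's: both proofs extract the $n-2$ forced singleton color classes $\{m_i\}$ coming from the leaves (each leaf $x_i$ has $N_{M(T)}(x_i)=\{m_i\}$, so the class totally dominating $x_i$ must be exactly $\{m_i\}$), and then show that the central part of the double star requires two additional classes, giving the lower bound $n$ that matches Theorem~\ref{chi_d^t(M(T))< n}.

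One structural claim in your write-up is false, although harmlessly so. You assert that $u$, $w$, $m_0$ are \emph{pairwise} adjacent in $M(T)$ and form a triangle. They do not: by the definition of the middle graph, $V(T)$ is an independent set of $M(T)$ (this is exactly what makes Lemma~\ref{a(M(G))=n} work), so the two original vertices $u$ and $w$ are \emph{not} adjacent in $M(T)$; the set $\{u,m_0,w\}$ induces a path, not a triangle. Your contradiction still goes through, because the leftover class $V^\ast$ would have to contain both $m_0$ and $u$ (or $m_0$ and $w$), and $m_0$ \emph{is} adjacent to each of $u$ and $w$; this single edge is all that is needed, and it is precisely the edge the paper invokes (the induced $K_2$ on $\{m_{(n-1)n}, v_n\}$). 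So the proof is correct once the word ``triangle'' is replaced by the adjacency of $m_0$ with the central vertices. A minor further remark: Theorem~\ref{theo:TDCstargraph} is stated for $K_{1,n}$ with $n\ge 3$, so for $T=K_{1,n-1}$ it covers $n\ge 4$; you correctly flag that $n=2,3$ need a separate (easy) check, which the paper glosses over.
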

\begin{proof}
Assume $V=V(T)=\{v_1,\dots,v_n\}$. Then $V(M(T))=V\cup M$ where
$M=\{m_{i,j}|v_iv_j\in E(T)\}$. 

If $\diam(T)= 1$, then $T\cong K_2$ and $M(T)\cong P_3$, and so $\chi_d^t(M(T))=2$. 

If $\diam(T)= 2$, then $n\geq 3$ and $T\cong K_{1,n-1}$ and so $\chi_d^t(M(T))=n$, by Theorem~\ref{theo:TDCstargraph}. 

If $\diam(T)= 3$, then $T$ is a tree which is obtained by joining central vertex $v$ of a tree $K_{1,p}$ and the central vertex $w$ of a tree $K_{1,q}$ where $p+q=n-2$. Let $\leaf(T)=\{v_i~|~ 1\leq i\leq n-2\}$ be the set of leaves of $T$. Obviously $V(T)=\leaf(T)\cup \{v,w\}$
and $|\leaf(T)|=n-2$. Define $v_{n-1}=v$ and $v_n=w$. Let  $f=(V_1,\dots,V_{\ell})$ be a TDC of $M(T)$. Since, for each $1\le i\le n-2$, if $v_{i}\succ_t V_{k_i}$ for some $1\le k_i\le \ell$, then $|V_{k_i}|=1$ and $V_{k_i}$ is an unique color class for each $1\leq i\leq n-2$, then $\chi_d^t(M(T))\geq n-2$. Since the induced subgraph of $M(T)$ by $\{m_{{(n-1)}n},v_n\}$ is a complete graph of order $2$, we conclude that we need another $2$ colors, so $\chi_d^t(M(T))\geq n$. On the other hand, since $g=(V_1,\dots,V_n)$, where $V_i=\{m_{ij}\}$ for $1\leq i\leq n-2$, $n\leq j\leq n-1$,  $V_{n-1}=\{m_{{(n-1)}n}\}$ and $V_n=V$,  is a TDC of $M(T)$, we have $\chi_d^t(M(T))\leq n$, and hence $\chi_d^t(M(T))=n$.
\end{proof}
\begin{Remark}
Notice that the inequality of Theorem \ref{chi_d^t(M(T))< n} is sharp by Theorem \ref{chi_d^t(M(T))=n}.
\end{Remark}
\begin{Remark}
In general, the converse implication of Theorem \ref{chi_d^t(M(T))=n} is not true. To see this it is enough to look at Theorem \ref{chi_d^t(M(S_{1,n,n}))=2n+1}. In fact if $T=S_{1,n,n}$ with $n \ge 2$, then $\chi_d^t(M(T))=|V(T)|$, even if $\diam(T)= 4$. 
\end{Remark}

%
%

%


\bigskip
\paragraph{\textbf{Acknowledgements}} During the preparation of this article the last author was supported by JSPS Grant-in-Aid for Early-Career Scientists (19K14493).



\begin{thebibliography}{20}


\bibitem{Clark} J. Clark and D A. Holton, \textit{A First Look At Graph Theory}, AlliedPub., India (1991).


\bibitem{hhs1} T. W. Haynes, S. T. Hedetniemi, P. J. Slater (eds). \textit{Fundamentals Domination in Graphs}, Marcel Dekker, Inc. New York, 1998.

\bibitem{HamYos} T. Hamada and I. Yoshimura, Traversability and connectivity of the middle graph of a graph, {\em Discrete Mathematics}, 14 (1976) 247--255.


\bibitem{hhs2} T. W. Haynes, S. T. Hedetniemi, P. J. Slater (eds), \textit{Domination in Graphs: Advanced Topics}, Marcel Dekker, Inc. New York, 1998.

\bibitem{Hen2015} M. A. Henning, Total dominator colorings and total domination in graphs, \emph{Graphs and Combinatorics}, 31 (2015) 953--974.

\bibitem{HeYe13} M. A. Henning, A. Yeo, \emph{Total domination in graphs} (Springer Monographs in Mathematics) (2013) ISBN: 978-1-4614-6524-9 (Print) 978-1-4614-6525-6 (Online).


\bibitem{Kaz2015}  A. P. Kazemi, Total dominator chromatic number of a graph, {\em Transactions on Combinatorics}, 4(2) (2015), 57--68.

\bibitem{2Kaz} F. Kazemnejad, A. P. Kazemi, Total dominator coloring of central graphs, Arc Comb.,
accepted, arXiv:1801.05137v3.

\bibitem{3Kaz} A. P. Kazemi, F. Kazemnejad, Total dominator total chromatic numbers of cycles
and paths, arXiv:1912.01402.

\bibitem{4Kaz} A. P. Kazemi, F. Kazemnejad, Total Dominator Total Chromatic Numbers of Wheels, Complete bipartite graphs and Complete graphs, arXiv:2002.11334.

\bibitem{1Kaz} A. P. Kazemi, F. Kazemnejad, S. Moradi, Total dominator total coloring of graphs, manuscript.



\bibitem{KPPTdomin} F. Kazemnejad, B. Pahlavsay, E. Palezzato, M. Torielli, Domination number of middle graph, arXiv:2008.02975.

\bibitem{KPPT} F. Kazemnejad, B. Pahlavsay, E. Palezzato, M. Torielli, Total domination number of middle graph, submitted.


\bibitem{PPT3dominrook} B. Pahlavsay, E. Palezzato and M. Torielli, 3-tuple total domination number of rook's graphs. To appear in Discussiones Mathematicae Graph Theory.

\bibitem{PPTdominlatin} B. Pahlavsay, E. Palezzato and M. Torielli, Domination in latin square graphs. To appear in Graphs and Combinatorics.




\bibitem{West} {D. B. West,} \textit{Introduction to Graph Theory}, 2nd ed, prentice hall, USA, (2001).


\end{thebibliography}
\end{document}